\newtheorem{thm}{Theorem}[section]
\newtheorem{lem}[thm]{Lemma}
\newtheorem{cor}[thm]{Corollary}
\newtheorem{pro}[thm]{Proposition}
\newtheorem{rem}[thm]{Remark}
\newcommand{\RM}{\mathbb{R}}
\newcommand{\QM}{\mathbb{Q}}
\newcommand{\NM}{\mathbb{N}}
\newcommand{\CM}{\mathbb{C}}
\newcommand{\HM}{\mathbb{H}}
\newcommand{\Mat}{\operatorname{Mat}}
\newcommand{\Sdet}{\operatorname{Sdet}}
\newcommand{\RP}{\operatorname{Re}\,}
\title{{\Large {\bf The quaternionic weighted zeta function of a graph}}
\author{
{\small Norio Konno}\\
{\scriptsize Department of Applied Mathematics, 
Faculty of Engineering, 
Yokohama National University}\\
{\scriptsize Hodogaya, Yokohama 240-8501, Japan}\\
{\scriptsize e-mail: konno@ynu.ac.jp}\\
{\small Hideo Mitsuhashi}\\
{\scriptsize Faculty of Education, 
Utsunomiya University}\\
{\scriptsize Utsunomiya, Tochigi 321-8505, Japan}\\
{\scriptsize e-mail: mitsu@cc.utsunomiya-u.ac.jp}\\
{\small Iwao Sato}\\
{\scriptsize Oyama National College of Technology}\\
{\scriptsize Oyama, Tochigi 323-0806, Japan}\\
{\scriptsize e-mail: isato@oyama-ct.ac.jp}\\}
}
\date{\empty }
\begin{document}
\maketitle

\par\noindent
\begin{small}
\par\noindent
{\bf Abstract}. 
We establish the quaternionic weighted zeta function of a graph and 
its Study determinant expressions. 
For a graph with quaternionic weights on arcs, we 
define a zeta function by using an infinite product which is regarded as 
the Euler product. 
This is a quaternionic extension of the square of the Ihara zeta function. 
We show that the new zeta function can be expressed as the exponential of a 
generating function and that it has two Study determinant expressions, which are 
crucial for the theory of zeta functions of graphs.

\footnote[0]{
{\it Abbr. title:} The quaternionic weighted zeta function of a graph
}
\footnote[0]{
{\it AMS 2010 subject classifications: }
05C50, 15A15, 11R52
}
\footnote[0]{
{\it Keywords: } 
Ihara zeta function, Quaternion, Study determinant, Formal power series 
}
\end{small}

\setcounter{equation}{0}
\section{Introduction}
Zeta functions of graphs started from Ihara zeta functions of regular 
graphs by Ihara \cite{Ihara1966}. 
Originally, Ihara \cite{Ihara1966} presented $p$-adic Selberg zeta functions of 
discrete groups, and showed that its reciprocal is an explicit polynomial. 
Ihara also showed the logarithm of Ihara zeta function has an expression in the form of 
a generating function. 
Serre \cite{Serre} pointed out that the Ihara zeta function is the zeta function of 
the quotient $T/ \Gamma $ (a finite regular graph) of the one-dimensional 
Bruhat-Tits building $T$ (an infinite regular tree) associated with 
$GL(2, k_p)$. 
A zeta function of a regular graph $G$ associated with a unitary 
representation of the fundamental group of $G$ was developed by 
Sunada \cite{Sunada1986}, \cite{Sunada1988}. 
Hashimoto \cite{Hashimoto1989} treated multivariable zeta functions of bipartite graphs. 
For a general graph, Hashimoto \cite{Hashimoto1989} gave a determinant expression 
for its Ihara zeta function by its edge matrix. 
Bass \cite{Bass1992} generalized Ihara's result on the Ihara zeta function of 
a regular graph to an irregular graph, and showed that its reciprocal is 
again a polynomial. 
Various proofs of Bass' Theorem have been given by 
Stark and Terras \cite{ST1996}, Foata and Zeilberger \cite{FZ1999}, Kotani and Sunada \cite{KS2000}. 
For the weighted type of zeta function of graphs, Hashimoto \cite{Hashimoto1990} 
introduced a zeta function of a graph with weights assigned to its edges. 
Stark and Terras \cite{ST1996} defined the edge zeta function of a graph with weights 
assigned to its arcs, and gave its determinant expression by using its edge matrix. 
Mizuno and Sato \cite{MS2004} introduced a special version of the edge zeta function 
of a graph, and defined the weighted zeta function of a graph by using arc weights 
and the variable $t$ counting the length of cycles. 
Later, they called this zeta function the first weighted zeta function in order to 
distinguish it from another zeta function which Sato defined after. 
It is noteworthy that the expressions of those zeta functions by the exponentials 
of generating functions were important to deriving the determinant expressions 
in the studies above. 
Recently, Watanabe and Fukumizu \cite{WF2009} presented a determinant expression 
for the edge zeta function of a graph by using matrices with size of the number of 
its vertices. 
As described above, zeta functions of graphs have been investigated for half a century. 

On the other hand, the quaternions were discovered by Hamilton in 1843. 
It can be considered as an extension of the complex numbers. However, 
quaternions do not commute mutually in general. 
For many years, a number of people, for example Cayley, 
Study, Moore, Dieudonn{\'e}, Dyson, Mehta, Xie, Chen, 
have given different definitions of determinants of quaternionic matrices. 
Detailed accounts on the determinants of  
quaternionic matrices can be found in, for example, \cite{Aslaksen1996,Zhang1997}. 
In order to extend the zeta function of a graph to the case of quaternions, 
we will use the approach developed by Study \cite{Study1920}. 
The Study determinant is the unique, up to a real power factor, functional 
which satisfies the following three axioms \cite{Aslaksen1996}: 
\begin{description}
\item (A1) $d({\bf A})=0{\;\Leftrightarrow\;}{\bf A}$ is singular.
\item (A2) $d({\bf AB})=d({\bf A})d({\bf B})$ for all ${\bf A,B}{\in}\Mat(n,\HM)$.
\item (A3) If ${\bf A}'$ is obtained form ${\bf A}$ by adding a left-multiple of a row to 
another row or a right-multiple of a column to another column, 
then $d({\bf A}')=d({\bf A})$.
\end{description}
Therefore it is essential to investigate the relation between the Study determinant
and the quaternionic zeta function of a graph. 
The advantage of this approach is that one can reduce a calculation of the Study determinant 
to that of the ordinary determinant. Its disadvantage is that the Study determinant 
is not an exact extension of the determinant but is rather that of the square of it.

Our aim in this paper is to define the quaternionic zeta function of a graph 
by using an infinite product, namely the Euler product, and to show its essential characteristics. 
Specifically, we derive its expression in the form of the exponential of a generating function 
and determine its two types of Study determinant expressions such as in 
\cite{Hashimoto1989} (Hashimoto type) and in \cite{Bass1992} (Bass type). 
Both types are crucial for a zeta function of a graph. 
In order to obtain these results, we explain the formal power series on 
a monoid algebra for preparations. 
The formal series, which includes the formal power series, is treated in detail 
in \cite{BR2011}
Our approach in this material follows the manner in \cite{FZ1999,RS1987}. 
Furthermore, we give an account of some relations between their exponentials and 
logarithms which can be found in Chapter IV of \cite{Bourbaki}. 
Up to the present, no quaternionic extension of 
zeta functions of graphs has been appeared. Hence our results can be regarded as 
the first successful construction of a quaternionic zeta function of a graph.

The rest of the paper is organized as follows. 
Section 2 treats noncommutative formal series and monoids. 
The notion of Lyndon word is given, and some application of the factorization theorem 
is discussed. The conclusion (Proposition \ref{AmitsurForMatrixThm}) plays an 
essential role in our investigation. 
In Section 3, we define the exponential and the logarithm of noncommutative formal 
power series and give some formulas of which we make use in Section 6. 
In Section 4, we explain the Study determinant of a quaternionic matrix and extend it to 
the matrix whose entries are formal power series with quaternionic coefficients. 
In Section 5, we provide a summary of the Ihara zeta function and the various zeta function 
of a graph, and present their determinant expressions. 
In Section 6, we define a quaternionic zeta function of a graph by using an infinite 
product, and give its expression in the form of the exponential of a generating function. 
In Section 7, we show that the quaternionic zeta functions has two types of 
Study determinant expressions. 
One (Theorem \ref{DetExpressionByEdgeMatrix}) is an analogy to Hashimoto type expression 
\cite{Hashimoto1989} and 
another (Theorem \ref{DetExpressionOfBassType}) is to Bass type \cite{Bass1992}.

\section{Noncommutative formal power series and monoids}
Let $R$ be a commutative ring with unity, and $A$ an algebra over $R$. 
$A[[t]]$ denotes the ring of formal power series in $t$ 
with coefficients in $A$. 
Each element $\alpha$ of $A[[t]]$ is expressed as 
\[
\alpha=\sum_{k{\geq}0}\alpha_kt^k \quad (\alpha_k{\;\in\;}A).
\]
$A[[t]]$ can be equipped with the topology defined by 
the following manner. 
Let $\omega$ be the function defined as follows: 
\begin{equation*}\label{UltrametricDistance}
\begin{split}
\omega : &A[[t]]{\times}A[[t]]
{\longrightarrow}\NM{\;\cup\;}\{\infty\} \\
& \omega(\alpha,\beta)=\inf \{n{\;\in\;}\NM\;|\;\alpha_n{\;\neq\;}\beta_n \}.
\end{split}
\end{equation*}
Then an ultrametric distance $d_{\omega}$ on $A[[t]]$ 
is given by $d_{\omega}(\alpha,\beta)=2^{-\omega(\alpha,\beta)}$ and a topology on 
$A[[t]]$ is derived from $d_{\omega}$.

Let $G$ be a monoid. 
$R[G]$ denotes the {\it monoid algebra} of $G$ over $R$.  
$R[G]$ is the set of formal sums $z=\sum_{g{\;\in\;}G}z_gg$, 
where $z_g{\;\in\;}R$ for each $g{\;\in\;}G$ and $z_g=0$ for all 
but finitely many $g$. 
The addition in $R[G]$ is coefficient-wise, and 
the elements of $R$ commute with the elements of $G$ in the multiplication. 

Let $X=\{x_1,{\cdots},x_N\}$ be a finite nonempty totally ordered set in which 
elements are arranged ascendingly. 
$X^*$ denotes the free monoid generated by 
$X$. Let $<$ be the lexicographic order on $X^*$ derived from the total order on $X$. 
For a word $w=x_{i_1}x_{i_2}{\cdots}x_{i_r}{\;\in\;}X^*$, 
$r$ is called the {\it length} of $w$ which is denoted by $|w|$. 
The length of the empty word is defined to be $0$. 
A nonempty word $w$ in $X^*$ is called a {\it Lyndon word} if $w$ is {\it prime}, 
namely, not a power $w'^r$ of any other word $w'$ for any $r\geq2$, and 
is minimal in the cyclic rearrangements of $w$. We denote by $L_X$ the set of Lyndon words 
in $X^*$. 
It is well known that any nonempty word $w$ can be formed uniquely 
as a nonincreasing sequence of Lyndon words. 

\begin{thm}\label{LyndonFactorization}
For any nonempty word $w{\;\in\;}X^*$, there exists a unique nonincreasing sequence of 
Lyndon words $l_1,l_2,\cdots,l_r$ such that $w=l_1l_2{\cdots}l_r$. 
\end{thm}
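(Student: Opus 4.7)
The plan is to prove the statement, which is the Chen–Fox–Lyndon theorem, by splitting it into existence and uniqueness and inducting on $|w|$.

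Existence. First I would establish the auxiliary concatenation lemma: if $u,v\in L_X$ with $u<v$, then $uv\in L_X$. The key points are (i) $uv<vu$, which follows from $u<v$ and yields primality of $uv$, and (ii) $uv$ is strictly smaller than every proper nonempty suffix. Suffixes of $uv$ falling entirely inside $v$ are handled by combining the Lyndon property of $v$ with the inequality $u<v$; suffixes of the form $u'v$ where $u'$ is a proper nonempty suffix of $u$ are handled by the Lyndon property of $u$. Granted this lemma, I would produce a factorization by a greedy merging procedure: start from the trivial factorization $w=x_{i_1}x_{i_2}\cdots x_{i_n}$ into single letters (each of which is a Lyndon word), and as long as the current sequence contains adjacent factors $l_j,l_{j+1}$ with $l_j<l_{j+1}$, merge them into the single Lyndon word $l_jl_{j+1}$ via the lemma. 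The number of factors strictly decreases, so the procedure terminates, and the terminal sequence has no ascent and is therefore nonincreasing.

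Uniqueness. I would characterize the last factor $l_r$ as the lexicographically smallest nonempty suffix of $w$. Any nonempty suffix $s$ of $w=l_1l_2\cdots l_r$ either is a suffix of $l_r$ itself, in which case the Lyndon property of $l_r$ gives $s\geq l_r$, or has the form $s=l_j'l_{j+1}\cdots l_r$ for some $j<r$, where $l_j'$ is a nonempty suffix of $l_j$. In the second case, the Lyndon property of $l_j$ gives $l_j\leq l_j'$ (with equality iff $l_j'=l_j$), and combining this with the chain $l_j\geq l_{j+1}\geq\cdots\geq l_r$ forces $s>l_r$. This characterization determines $l_r$ intrinsically from $w$, and applying the induction hypothesis to $w'=l_1\cdots l_{r-1}$ (whose length is strictly smaller) completes the uniqueness argument.

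The main obstacle will be the delicate lexicographic bookkeeping in both the concatenation lemma and the suffix-minimality claim, since comparisons between words of unequal length need to be handled carefully when one word is a prefix of another. The cleanest route is to systematically use the equivalent characterization of Lyndon words, namely that $w$ is Lyndon iff $w$ is strictly less than every proper nonempty suffix of $w$, and to reduce each comparison to this single criterion.
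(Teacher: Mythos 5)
The paper does not actually prove Theorem \ref{LyndonFactorization}; it only cites Lothaire, and the argument given there is essentially the one you outline (existence by merging adjacent ascending Lyndon factors via the concatenation lemma, uniqueness by identifying the last factor as the lexicographically smallest nonempty suffix). Your plan is correct, and you have rightly flagged the only delicate points: the equivalence of the paper's definition (prime and minimal among cyclic rotations) with the ``smaller than every proper nonempty suffix'' characterization, and the prefix cases in the lexicographic comparisons (e.g.\ that $x>y$ implies $xz>y$ for any $z$), both of which go through routinely.
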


\begin{proof}  For the proof, see for example \cite{Lothaire1997}. 
\end{proof}

Let us consider $R[X^*][[t]]$. 
Since $(1-lt)^{-1}=1+lt+(lt)^2+{\cdots}$ for every $l{\;\in\;}X^*$ 
in $R[X^*][[t]]$, Theorem \ref{LyndonFactorization} implies: 
\begin{equation}\label{SumOfWords1}
\prod_{l{\in}L_X}^{>}(1-lt^{|l|})^{-1}=\sum_{w{\in}X^*}wt^{|w|},
\end{equation}
in $R[X^*][[t]]$, 
where $\displaystyle \prod_{l{\in}L_X}^{>}$ means that the factors are multiplied 
in decreasing order. 
On the other hand, it follows that 
\begin{equation}\label{SumOfWords2}
\sum_{w{\in}X^*}wt^{|w|}=\{1-(x_1+{\cdots}+x_N)t\}^{-1}.
\end{equation}
(\ref{SumOfWords1}) and (\ref{SumOfWords2}) imply the following equation: 
\begin{equation}\label{InveseOfAmitsurIdentity}
\{1-(x_1+{\cdots}+x_N)t\}^{-1}=\prod_{l{\in}L_X}^{>}(1-lt^{|l|})^{-1}.
\end{equation}
From (\ref{InveseOfAmitsurIdentity}), we obtain: 
\begin{pro}
\begin{equation}\label{AmitsurIdentity}
1-(x_1+{\cdots}+x_N)t=\prod_{l{\in}L_X}^{<}(1-lt^{|l|}),
\end{equation}
where $\displaystyle \prod_{l{\in}L_X}^{<}$ means that the factors are multiplied 
in increasing order. 
\end{pro}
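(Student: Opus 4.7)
The plan is to derive (\ref{AmitsurIdentity}) directly from (\ref{InveseOfAmitsurIdentity}) by inverting both sides in the topological ring $R[X^*][[t]]$. The central fact is the non-commutative inversion rule: for invertible elements $a_1,\ldots,a_n$ in a ring, $(a_1 a_2 \cdots a_n)^{-1} = a_n^{-1}\cdots a_1^{-1}$, so reversing the order of factors while inverting each yields the inverse of the original product.

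First I would confirm that both infinite products make sense in $R[X^*][[t]]$ equipped with the $d_\omega$-topology. Each factor $1-lt^{|l|}$ is a unit with inverse $\sum_{k\geq 0}(lt^{|l|})^k$, and since there are only finitely many Lyndon words of each length, $1-lt^{|l|}\to 1$ in the ultrametric sense as $|l|\to\infty$. Hence both $\prod_{l\in L_X}^{>}(1-lt^{|l|})^{-1}$ and $\prod_{l\in L_X}^{<}(1-lt^{|l|})$ converge in $R[X^*][[t]]$.

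Next I would truncate (\ref{InveseOfAmitsurIdentity}) by restricting the product to Lyndon words of length at most $M$. On the right this is a finite non-commutative product, to which the inversion rule applies verbatim, yielding $\prod_{|l|\leq M,\, l\in L_X}^{<}(1-lt^{|l|})$. Passing to the limit $M\to\infty$, using continuity of inversion at elements with constant term $1$ together with the ultrametric property that factors coming from $|l|>M$ affect only coefficients of $t^k$ with $k>M$, we obtain
\[
\left(\prod_{l\in L_X}^{>}(1-lt^{|l|})^{-1}\right)^{-1}=\prod_{l\in L_X}^{<}(1-lt^{|l|}).
\]
Inverting the left-hand side of (\ref{InveseOfAmitsurIdentity}) gives $1-(x_1+\cdots+x_N)t$, and equating the two expressions yields (\ref{AmitsurIdentity}).

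The only real obstacle is justifying the manipulations with infinite non-commutative products. This reduces, via the ultrametric topology, to a statement about finite products: for any degree bound $M$, only the (finitely many) factors with $|l|\leq M$ influence the coefficients of $t^k$ for $k\leq M$, so both the convergence of the products and the reversal-inversion identity follow from the corresponding finite identities together with continuity of multiplication and inversion in $R[X^*][[t]]$.
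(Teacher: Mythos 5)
Your proposal is correct and follows essentially the same route as the paper: both arguments reduce the infinite-product identity to the finite reversal-inversion rule by observing that only the finitely many Lyndon words with $|l|\leq r$ can affect the coefficient of $t^k$ for $k\leq r$ in the ultrametric topology on $R[X^*][[t]]$. The paper packages this as a direct coefficient-by-coefficient verification that the product of the two infinite products equals $1$, while you phrase it as continuity of inversion, but the substance is identical.
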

\begin{proof}  In order to show that 
\begin{equation}\label{ProdEquals1}
\Big{\{}\prod_{l{\in}L_X}^{>}(1-lt^{|l|})^{-1}\Big{\}}
\Big{\{}\prod_{l{\in}L_X}^{<}(1-lt^{|l|})\Big{\}}=1,
\end{equation}
we check that the coefficient of $t^r$ of the left hand side is equal to 
$1$ if $r=0$ and $0$ if $r>0$. Since 
$\prod_{l{\in}L_X}^{>}(1-lt^{|l|})^{-1}=
\prod_{l{\in}L_X}^{>}(1+lt^{|l|}+l^2t^{2|l|}+{\cdots})$, 
the coefficient of at most $r$-th power of $t$ in the left hand side of 
(\ref{ProdEquals1}) is the same as that of the product: 
\[
\Big{\{}\prod_{\substack{l{\in}L_X\\|l|{\leq}r}}^{>}
(1+lt^{|l|}+l^2t^{2|l|}+{\cdots})\Big{\}}
\Big{\{}\prod_{\substack{l{\in}L_X\\|l|{\leq}r}}^{<}(1-lt^{|l|})\Big{\}},
\]
for every nonnegative integer $r{\;\geq\;}0$. 
This is a finite product since $|X|<\infty$ and therefore 
is equal to $1$. Thus (\ref{ProdEquals1}) holds. 
\end{proof}

Let $[n]=\{1,2,\cdots,n\}$ with the natural order and $[n]{\times}[n]$ the 
cartesian product of $[n]$ with the lexicographic order derived from 
the natural order on $[n]$. 
We set $X=\{x(i,j)\;|\;(i,j){\;\in\;}[n]{\times}[n]\}$ with the total order derived from 
$[n]{\times}[n]$. 
For each matrix ${\bf A}=(a_{ij}){\;\in\;}\Mat(n,A)$, we define $\rho^{\bf A}$ to be the 
$R$-algebra homomorphism from $R[X^*]$ 
to $\Mat(n,A)$ defined by $\rho^{\bf A}(x(i,j))=a_{ij}{\bf E}_{ij}$, where ${\bf E}_{ij}$ denotes the 
$(i,j)$-matrix unit. Let ${\bf A}(i,j)=a_{ij}{\bf E}_{ij}$. 
One  can extend $\rho^{\bf A}$ to the $R$-algebra homomorphism $\rho^{\bf A}_t$ from $R[X^*][[t]]$ 
to $\Mat(n,A)[[t]]$ by defining $\rho^{\bf A}_t(t)=t$. 
For each Lyndon word $l=(i_1,j_1)(i_2,j_2)$ 
${\cdots}(i_r,j_r){\;\in\;}L_{[n]{\times}[n]}$, 
we put ${\bf A}_l={\bf A}(i_1,j_1){\bf A}(i_2,j_2){\cdots}{\bf A}(i_r,j_r)$. 
Then we have 
\begin{equation}\label{AmitsurForMatrix}
{\bf I}_n-\{{\bf A}(1,1)+{\bf A}(1,2)+{\cdots}+{\bf A}(n,n)\}t
=\prod_{l{\in}L_{[n]{\times}[n]}}^{<}({\bf I}_n-{\bf A}_lt^{|l|}).
\end{equation}
However we easily see that
\begin{equation*}
{\bf A}_l=\begin{cases}
a_{i_1i_2}a_{i_2i_3}{\cdots}a_{i_{r-1}i_r}a_{i_rj_r}{\bf E}_{i_1j_r} & 
\text{if $j_k=i_{k+1}$ for $k=1,{\cdots},r-1$}, \\
{\bf O}_n\;(\text{zero matrix of size $n$}) & \text{otherwise,}\end{cases}
\end{equation*}
and that ${\bf A}(1,1)+{\bf A}(1,2)+{\cdots}+{\bf A}(n,n)={\bf A}$, hence we obtain: 

\begin{pro}\label{AmitsurForMatrixThm}
Let ${\bf A}{\;\in\;}\Mat(n,A)$. Then, 
\begin{equation}\label{EqnAmitsurIdentity}
{\bf I}_n-{\bf A}t=\prod_{\substack{(i_1,j_1){\cdots}(i_r,j_r){\in}L_{[n]{\times}[n]}\\
j_k=i_{k+1}\;(k=1,{\cdots},r-1)}}^{<}({\bf I}_n-a_{i_1i_2}a_{i_2i_3}{\cdots}
a_{i_{r-1}i_r}a_{i_rj_r}{\bf E}_{i_1j_r}t^{r}),
\end{equation}
in $\Mat(n,A)[[t]]$. 
\end{pro}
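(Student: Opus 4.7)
The plan is to apply the $R$-algebra homomorphism $\rho^{{\bf A}}_t : R[X^*][[t]] \to \Mat(n,A)[[t]]$ to the Amitsur identity~(\ref{AmitsurIdentity}), specialized to the alphabet $X = \{x(i,j) \mid (i,j) \in [n]\times[n]\}$. The excerpt has in fact already executed this step to produce~(\ref{AmitsurForMatrix}): the left-hand side collapses to ${\bf I}_n - {\bf A}t$ using $\sum_{(i,j)}a_{ij}{\bf E}_{ij} = {\bf A}$, and the right-hand side becomes the ordered infinite product $\prod^{<}_{l \in L_{[n]\times[n]}}({\bf I}_n - {\bf A}_l t^{|l|})$.

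Next I would analyze each matrix ${\bf A}_l$ attached to a Lyndon word $l = (i_1,j_1)(i_2,j_2)\cdots(i_r,j_r)$. Repeated application of the matrix-unit rule ${\bf E}_{ij}{\bf E}_{k\ell} = \delta_{jk}{\bf E}_{i\ell}$ to the product $\prod_{k=1}^{r} a_{i_kj_k}{\bf E}_{i_kj_k}$ forces ${\bf A}_l = {\bf O}_n$ unless $j_k = i_{k+1}$ for every $k = 1,\ldots,r-1$, in which case the product telescopes to $a_{i_1i_2}a_{i_2i_3}\cdots a_{i_{r-1}i_r}a_{i_rj_r}{\bf E}_{i_1j_r}$. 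This is precisely the case distinction already displayed in the excerpt.

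To conclude, I would observe that each Lyndon word failing the concatenation condition contributes the trivial factor ${\bf I}_n - {\bf O}_n t^{|l|} = {\bf I}_n$, the multiplicative identity of $\Mat(n,A)[[t]]$, so these factors can be deleted from the infinite product without changing its value. Restricting the product to the surviving Lyndon words yields exactly~(\ref{EqnAmitsurIdentity}).

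The argument is really more bookkeeping than obstacle, but one minor point deserves care: the legitimacy of deleting infinitely many identity factors from an infinite product in $\Mat(n,A)[[t]]$. This is handled by the ultrametric topology introduced earlier in the section—for each $r$, only the finitely many Lyndon words with $|l| \leq r$ can affect the coefficient of $t^r$, and on this finite partial product the deleted factors are genuinely identities, so the restricted product converges to the same element. Beyond this verification, the proof is a direct specialization of the word-level Amitsur identity combined with matrix-unit algebra.
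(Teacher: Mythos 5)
Your proposal is correct and follows essentially the same route as the paper: apply $\rho^{\bf A}_t$ to the Amitsur identity to obtain (\ref{AmitsurForMatrix}), compute ${\bf A}_l$ via the matrix-unit relations to see that only Lyndon words satisfying the concatenation condition $j_k=i_{k+1}$ survive, and drop the identity factors. Your extra remark justifying the deletion of infinitely many identity factors via the coefficient-of-$t^r$ argument is a detail the paper leaves implicit, but it does not change the approach.
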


\section{Exponentials and Logarithms of noncommutative formal power series}
Let $K=\QM[[t]]$ and $X=\{x_1,{\cdots},x_N\}$ as in the previous section. 
Then $K[X^*]=\QM[X^*][[t]]$, the ring of formal power series with 
coefficients in $\QM[X^*]$ where $t$ is considered as a central element 
in $K[X^*]$. Every $\alpha{\;\in\;}\QM[X^*][[t]]$ can be written as 
$\alpha=\sum_{k{\geq}0}\alpha_kt^k$ with $\alpha_k{\;\in\;}\QM[X^*]$. 
For an $\alpha$ which has zero constant term, 
we define the {\it exponential} of $\alpha$ to be the 
element of $\QM[X^*][[t]]$ as follows: 
\begin{equation}\label{DefExp}
\exp \alpha = \sum_{m{\geq}0}\dfrac{1}{m!}\alpha^m.
\end{equation}
Similarly, for an $\alpha$ whose constant term equals $1$, 
we define the {\it logarithm} of $\alpha$ as follows: 
\begin{equation}\label{DefLog}
\log \alpha = \sum_{n{\geq}1}\dfrac{(-1)^{n-1}}{n}(\alpha-1)^n.
\end{equation}
If $\alpha_0=1$, then $\alpha=1+\beta$ for some $\beta{\;\in\;}\QM[X^*][[t]]$ 
with zero constant term. Then we may write 
$\log \alpha=\log (1+\beta)=\sum_{n{\geq}1}(-1)^{n-1}n^{-1}\beta^n$ and 
$\log \alpha$ has zero constant term. 
Conversely, if $\beta$ has zero constant term, then the constant term of 
$\exp \beta$ equals $1$. 

\begin{thm}\label{ThmExpAndLog}
Let $\alpha,\,\beta{\;\in\;}\QM[X^*][[t]]$. 
\begin{enumerate}
\renewcommand{\labelenumi}{\rm (\arabic{enumi})}
\item
If $\alpha_0=1$, then $\exp\log \alpha =\alpha$. 
\item
If $\beta$ has zero constant term, then $\log\exp \beta=\beta$. 
\end{enumerate}
\end{thm}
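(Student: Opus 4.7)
The plan is to reduce both statements to the classical formal power series identities $\exp\log(1+u) = 1+u$ and $\log\exp u = u$ in the commutative ring $\QM[[u]]$ of formal power series in a single indeterminate. The crucial observation is that, although $\QM[X^*][[t]]$ is noncommutative, the series defining $\exp\alpha$ and $\log\alpha$ involve only powers of a single element, and these powers generate a commutative subalgebra; so all the analytic manipulations take place in a commutative setting.

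First I would record the convergence facts that make everything well defined. If $\gamma\in\QM[X^*][[t]]$ has zero $t$-constant term, then $\gamma^n$ lies in $t^n\QM[X^*][[t]]$, so for any sequence $(c_n)\subset\QM$ the sum $\sum_{n\geq 0}c_n\gamma^n$ converges in the ultrametric topology $d_\omega$. Thus $\exp\beta$ is well defined when $\beta$ has zero constant term, and $\log\alpha = \log(1+(\alpha-1))$ is well defined when $\alpha_0=1$.

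The main step is to exploit the following elementary device: for each $\gamma\in\QM[X^*][[t]]$ with zero constant term, define the map
\[
\varphi_\gamma : \QM[[u]] \longrightarrow \QM[X^*][[t]],
\qquad \sum_{k\geq 0} c_k u^k \longmapsto \sum_{k\geq 0} c_k \gamma^k.
\]
By the convergence remark $\varphi_\gamma$ is well defined, and it is a continuous $\QM$-algebra homomorphism because the powers of $\gamma$ commute pairwise. In particular, $\varphi_\gamma$ intertwines $\exp$ and $\log$: for any $f\in\QM[[u]]$ with $f(0)=0$ one has $\varphi_\gamma(\exp f) = \exp\varphi_\gamma(f)$ and $\varphi_\gamma(\log(1+f)) = \log(1+\varphi_\gamma(f))$, simply by pulling $\varphi_\gamma$ through the defining convergent sums. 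Applying $\varphi_\beta$ to the classical identity $\log\exp u=u$ now yields part~(2), and applying $\varphi_{\alpha-1}$ to $\exp\log(1+u)=1+u$ yields part~(1).

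The only technical point is the verification that $\varphi_\gamma$ is a continuous algebra homomorphism and that it does commute with $\exp$ and $\log$. This is routine once one observes that every expression involved lies in the commutative subring generated by $\gamma$, so the usual $t$-adic formal calculus applies unchanged. I do not anticipate any genuine obstacle: the classical commutative identities carry the substantive content, and the role of the proof is merely to certify that the noncommutativity of $\QM[X^*]$ does no harm.
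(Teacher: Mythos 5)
Your argument is correct, and it rests on the same underlying observation as the paper's own proof: $\exp$ and $\log$ are each applied to powers of a single series, so every manipulation takes place in the commutative subalgebra generated by that series and the statement reduces to a one-variable identity. Where you differ is in how the reduction is executed. You package it as a substitution homomorphism $\varphi_\gamma:\QM[[u]]\to\QM[X^*][[t]]$, $u\mapsto\gamma$, verify that it is a continuous $\QM$-algebra map, and transport the formal identities $\exp\log(1+u)=1+u$ and $\log\exp u=u$ from the commutative ring $\QM[[u]]$. The paper instead compares coefficients directly: it truncates the double series defining $\exp\log(1+\beta)$, notes that the coefficient of each power $\beta^r$ is a universal rational number determined by a finite computation, and identifies these numbers with the Taylor coefficients of the real-analytic identity $\exp\log(1+x)=1+x$ for $|x|<1$. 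Your formulation is cleaner and more reusable --- the single homomorphism $\varphi_\gamma$ delivers both parts at once and would transport any other one-variable identity into $\QM[X^*][[t]]$ --- but it outsources the one-variable identities to the standard theory of commutative formal power series (e.g.\ Bourbaki, Ch.~IV, which the paper cites), whereas the paper's coefficient comparison proves them along the way, at the price of importing a real-analytic fact. The only step you should spell out is the multiplicativity of $\varphi_\gamma$ on genuinely infinite series: it holds on polynomials because powers of $\gamma$ commute, and extends to all of $\QM[[u]]$ by density of polynomials together with the continuity of $\varphi_\gamma$ and of multiplication in the $t$-adic topology. As you say, this is routine, and there is no gap.
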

\begin{proof}  
We shall prove (1). 
Letting $\alpha=1+\beta$, we have: 
\begin{equation}\label{EqnExpLog}
\exp\log \alpha=\exp\log (1+\beta)=
\sum_{m{\geq}0}\dfrac{1}{m!}\Big{(}\sum_{n{\geq}1}\dfrac{(-1)^{n-1}}
{n}\beta^n\Big{)}^m. 
\end{equation}
We consider the partial sum of (\ref{EqnExpLog}) as follows: 
\begin{equation}\label{EqnTruncatedExpLog}
\sum_{m=0}^k\dfrac{1}{m!}\Big{(}\sum_{n=1}^\ell\dfrac{(-1)^{n-1}}
{n}\beta^n\Big{)}^m.
\end{equation}
If $k,\ell{\;\geq\;}r$, then the coefficient of at most $r$-th power of 
$\beta$ in (\ref{EqnExpLog}) is equal to that in (\ref{EqnTruncatedExpLog}). 
On the other hand, by the Taylor expansion we have: 
\begin{equation}\label{EqnRealExpLog}
1+x=\exp\log (1+x)=\sum_{m{\geq}0}\dfrac{1}{m!}\Big{(}\sum_{n{\geq}1}\dfrac{(-1)^{n-1}}
{n}x^n\Big{)}^m, 
\end{equation}
for a real number $x$ with $|x|<1$. Similar to (\ref{EqnTruncatedExpLog}), 
if $k,\ell{\;\geq\;}r$, then the coefficient of at most $r$-th power of 
$x$ in (\ref{EqnRealExpLog}) is equal to that in (\ref{EqnRealTruncatedExpLog}): 
\begin{equation}\label{EqnRealTruncatedExpLog}
\sum_{m=0}^k\dfrac{1}{m!}\Big{(}\sum_{n=1}^\ell\dfrac{(-1)^{n-1}}{n}x^n\Big{)}^m.
\end{equation}
Hence the coefficient of at most $r$-th power of $\beta$ in (\ref{EqnExpLog}) 
is equal to that of $x$ in (\ref{EqnRealExpLog}). 
Since $r$ is an arbitrary nonnegative integer, the coefficient of any power of $\beta$ in (\ref{EqnExpLog}) 
coincide with the coefficient of the same power of $x$ in (\ref{EqnRealExpLog}). 
Moreover $\beta=\sum_{n{\geq}1}\alpha_nt^n$ implies that the coefficient of at most 
$r$-th power of $t$ is determined by the terms of at most $r$-th power of $\beta$ 
in (\ref{EqnExpLog}). 
Since $r$ is arbitrary, we obtain that $\exp\log (1+\beta)=1+\beta$ in $\QM[X^*][[t]]$. 
(2) can be proven in the same manner. 
\end{proof}

\begin{pro}\label{ProExp}
Assume $\alpha,\,\beta{\;\in\;}\QM[X^*][[t]]$ have zero constant terms. 
If $\alpha\beta=\beta\alpha$, then 
$\exp(\alpha+\beta)=(\exp \alpha)(\exp \beta)$. 
\end{pro}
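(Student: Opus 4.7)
The approach is straightforward: since $\alpha$ and $\beta$ commute, one can apply the binomial theorem to $(\alpha+\beta)^m$ and then rearrange the resulting double sum into a Cauchy product, exactly as in the classical real case. First I would use the definition (\ref{DefExp}) to write
\[
\exp(\alpha+\beta)=\sum_{m\geq 0}\frac{1}{m!}(\alpha+\beta)^m,
\]
and then invoke the hypothesis $\alpha\beta=\beta\alpha$ to expand
\[
(\alpha+\beta)^m=\sum_{k=0}^m\binom{m}{k}\alpha^k\beta^{m-k}.
\]
Substituting and using $\binom{m}{k}/m!=1/(k!(m-k)!)$, the target identity reduces, at the formal level, to the Cauchy product identity
\[
\sum_{m\geq 0}\sum_{k=0}^{m}\frac{\alpha^k\beta^{m-k}}{k!(m-k)!}
=\Big(\sum_{k\geq 0}\frac{\alpha^k}{k!}\Big)\Big(\sum_{j\geq 0}\frac{\beta^j}{j!}\Big).
\]

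To make this rigorous in $\QM[X^*][[t]]$, I would use the same truncation trick employed in the proof of Theorem \ref{ThmExpAndLog}. Since $\alpha$ and $\beta$ have zero constant term (in $t$), the power $\alpha^k\beta^{j}$ has $t$-valuation at least $k+j$, so for any fixed $r\geq 0$, only finitely many pairs $(k,j)$ with $k+j\leq r$ contribute to the coefficient of $t^r$ on either side. Thus on each side the coefficient of $t^r$ is a finite sum and the rearrangement is legitimate. Alternatively, I can observe that since $\alpha\beta=\beta\alpha$, the $\QM$-subalgebra $R$ of $\QM[X^*][[t]]$ generated by $\alpha,\beta$ is commutative, so in $R[[t]]$ the familiar formal identity $\exp(x+y)=\exp(x)\exp(y)$ holds by specialization of the real-variable identity (\ref{EqnRealExpLog}) applied termwise in the two variables.

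The main obstacle, as in Theorem \ref{ThmExpAndLog}, is not any algebraic step but the justification of rearranging an absolutely double-indexed sum in a noncommutative ring. The commutativity hypothesis $\alpha\beta=\beta\alpha$ is exactly what is needed to carry out the binomial expansion, and the zero-constant-term hypothesis is what ensures $t$-adic convergence/finiteness of each coefficient. Once those two ingredients are in place, the identity follows by matching coefficients of $t^r$ for every $r\geq 0$ against the corresponding scalar identity.
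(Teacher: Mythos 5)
Your proposal is correct and follows essentially the same route as the paper: the paper's one-line proof is precisely the binomial expansion of $(\alpha+\beta)^m$ under the commutativity hypothesis, rewritten as the Cauchy product $\sum_{m\geq0}\sum_{k+\ell=m}\frac{1}{k!\,\ell!}\alpha^k\beta^{\ell}=(\exp\alpha)(\exp\beta)$. Your additional remarks on the $t$-adic finiteness of each coefficient merely make explicit what the paper dismisses as ``easy calculations.''
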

\begin{proof}  
By easy calculations, we have: 
\[
\exp(\alpha+\beta)
=\sum_{m{\geq}0}\sum_{\substack{k,\ell{\geq}0\\k+\ell=m}}\dfrac{1}{k!\ell!}\alpha^k
\beta^{\ell}=(\exp \alpha)(\exp \beta).
\]
\end{proof}

\begin{pro}\label{ProLog}
Assume $\alpha,\,\beta{\;\in\;}\QM[X^*][[t]]$ satisfy $\alpha_0=\beta_0=1$ and put 
$\alpha'=\alpha-1,\ \beta'=\beta-1$. 
If $\alpha'\beta'=\beta'\alpha'$, then 
$\log(\alpha\beta)=\log \alpha+\log \beta$. 
\end{pro}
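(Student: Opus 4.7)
The plan is to dualize Proposition \ref{ProExp} through Theorem \ref{ThmExpAndLog}. Since $\alpha_0=\beta_0=1$, both $\log \alpha$ and $\log \beta$ are defined and have zero constant term. By Theorem \ref{ThmExpAndLog}(1), $\alpha=\exp\log \alpha$ and $\beta=\exp\log \beta$. If I can show that $\log \alpha$ and $\log \beta$ commute, then Proposition \ref{ProExp} yields $\alpha\beta=\exp(\log \alpha+\log \beta)$; since $\log \alpha+\log \beta$ has zero constant term, applying Theorem \ref{ThmExpAndLog}(2) to the right hand side gives $\log(\alpha\beta)=\log \alpha+\log \beta$.

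The main step, therefore, is to propagate commutativity of $\alpha'$ and $\beta'$ to commutativity of $\log \alpha$ and $\log \beta$. I would argue as follows. From (\ref{DefLog}), $\log \alpha=\sum_{n\geq1}(-1)^{n-1}n^{-1}(\alpha')^n$, so its partial sums are polynomials in $\alpha'$ alone, and likewise the partial sums of $\log \beta$ are polynomials in $\beta'$ alone. The hypothesis $\alpha'\beta'=\beta'\alpha'$ together with an obvious induction yields $(\alpha')^j(\beta')^k=(\beta')^k(\alpha')^j$ for all $j,k\geq0$. Hence every partial sum of $\log \alpha$ commutes with every partial sum of $\log \beta$.

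To pass to the limit, I would invoke the ultrametric topology $d_\omega$ on $\QM[X^*][[t]]$ introduced in Section 2: multiplication is continuous, and the partial sums of $\log \alpha$ converge to $\log \alpha$ because $\alpha'$ has zero constant term, so $(\alpha')^n$ has $t$-adic valuation $\geq n$ and the tails tend to zero in $d_\omega$; the same holds for $\log \beta$. Passing to the limit in the commutator identity shows that $(\log \alpha)(\log \beta)=(\log \beta)(\log \alpha)$. The main obstacle I anticipate is merely organizing this limit argument cleanly, since the topological statement on $\QM[X^*][[t]]$ is standard but not explicitly recorded in the excerpt; the algebraic content is entirely routine.

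With commutativity in hand, the rest is a one-line composition: applying Proposition \ref{ProExp} to $u=\log \alpha$ and $v=\log \beta$ gives $\exp(u+v)=(\exp u)(\exp v)=\alpha\beta$, and Theorem \ref{ThmExpAndLog}(2) applied to $u+v$ finishes the proof.
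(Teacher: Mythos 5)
Your proof is correct and follows essentially the same route as the paper: write $\alpha=\exp\log\alpha$ and $\beta=\exp\log\beta$ via Theorem \ref{ThmExpAndLog}(1), use commutativity of $\log\alpha$ and $\log\beta$ to invoke Proposition \ref{ProExp}, and finish with Theorem \ref{ThmExpAndLog}(2). In fact you supply more detail than the paper, which simply asserts $pq=qp$ without the limit argument justifying that power series in the commuting elements $\alpha'$ and $\beta'$ commute.
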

\begin{proof}  
Let $p=\log(1+\alpha'),\ q=\log(1+\beta')$. Both $p$ and $q$ have zero constant term. 
By Theorem \ref{ThmExpAndLog} (1), 
$\exp p=1+\alpha',\,\exp q=1+\beta'$. 
Since $pq=qp$, $(1+\alpha')(1+\beta')=(\exp p)(\exp q)=\exp(p+q)$ by Proposition 
\ref{ProExp}. Thus 
$\log((1+\alpha')(1+\beta'))=\log\exp(p+q)=p+q=\log(1+\alpha')+\log(1+\beta')$ 
by Theorem \ref{ThmExpAndLog} (2). 
\end{proof}

\begin{cor}\label{CorLog}
Assume $\alpha{\;\in\;}\QM[X^*][[t]]$ satisfies $\alpha_0=1$ and put 
$\alpha'=1-\alpha$. Then $\log\alpha^{-1}=-\log\alpha$ where 
$\alpha^{-1}=(1-\alpha')^{-1}=1+\alpha'+(\alpha')^2+{\cdots}$ is in 
$\QM[X^*][[t]]$. 
\end{cor}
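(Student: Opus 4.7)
The plan is to deduce this directly from Proposition \ref{ProLog} by taking the two factors there to be $\alpha$ and $\alpha^{-1}$. First I would observe that $\alpha^{-1}$ is meaningful in $\QM[X^*][[t]]$: since $\alpha_0=1$, the element $\alpha'=1-\alpha$ has zero constant term, so the geometric series $1+\alpha'+(\alpha')^2+\cdots$ converges in the $d_\omega$-topology and furnishes a two-sided inverse of $\alpha$. In particular $\alpha^{-1}$ itself has constant term equal to $1$, which puts us into the hypothesis regime of Proposition \ref{ProLog}.

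Next I would verify the commutativity hypothesis. Writing $\beta=\alpha^{-1}$ and $\beta'=\beta-1=\alpha'+(\alpha')^2+(\alpha')^3+\cdots$, we see that $\beta'$ is an honest power series in $\alpha'$, so $\alpha'\beta'=\beta'\alpha'$ trivially (no noncommutativity enters, because everything is a polynomial expression in the single element $\alpha'$). Note that it does not matter that Proposition \ref{ProLog} was stated with the sign convention $\alpha'=\alpha-1$ while the corollary uses $\alpha'=1-\alpha$; commutativity is insensitive to the sign.

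With both hypotheses verified, Proposition \ref{ProLog} gives
\[
\log(\alpha\cdot\alpha^{-1})=\log\alpha+\log\alpha^{-1}.
\]
The left-hand side is $\log 1$, and $\log 1=0$ directly from the definition (\ref{DefLog}) since the summand $(1-1)^n=0$ for every $n\geq 1$. Rearranging yields $\log\alpha^{-1}=-\log\alpha$, which is the desired identity.

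There is no real obstacle here; the only point that needs a moment of thought is the verification that $\alpha'$ and $\beta'$ commute in the possibly noncommutative ring $\QM[X^*][[t]]$, and this is immediate once one observes that $\beta'$ is a formal power series in $\alpha'$. The result is therefore a direct corollary of the preceding proposition together with $\log 1=0$.
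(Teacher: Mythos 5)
Your proposal is correct and follows essentially the same route as the paper: the paper likewise applies Proposition \ref{ProLog} to the product $(1-\alpha')(1-\alpha')^{-1}=1$ and uses $\log 1=0$. Your explicit check that $\beta'=\alpha'+(\alpha')^2+\cdots$ commutes with $\alpha'$ (being a power series in the single element $\alpha'$) is a detail the paper leaves implicit, and it is the right justification.
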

\begin{proof}  It follows from Proposition \ref{ProLog} that: 
\begin{equation*}
\begin{split}
0&=\log 1 = \log((1-\alpha')(1-\alpha')^{-1})\\
&=\log((1-\alpha')(1+\alpha'+(\alpha')^2+{\cdots}))\\
&=\log(1-\alpha')+\log(1+\alpha'+(\alpha')^2+{\cdots})\\
&=\log(1-\alpha')+\log((1-\alpha')^{-1}). 
\end{split}
\end{equation*}
\end{proof}

Let $A$ be a $\QM$-algebra. 
We set $X=\{x(i,j)\;|\;(i,j){\;\in\;}[n]{\times}[n]\}$ in the same manner as in 
the Section $2$. 
For each matrix ${\bf A}=(a_{ij}){\;\in\;}\Mat(n,A)$, we define $\sigma^{\bf A}$ to be the 
$\QM$-algebra homomorphism from $\QM[X^*]$ 
to $A$ defined by $\sigma^{\bf A}(x(i,j))=a_{ij}$. 
It is clear that Theorem \ref{ThmExpAndLog}, Proposition \ref{ProExp}, 
Proposition \ref{ProLog} and Corollary \ref{CorLog} hold even if one replaces 
$\QM[X^*]$ with $A$ since 
one can extend $\sigma^{\bf A}$ to the $\QM$-algebra homomorphism $\sigma^{\bf A}_t$ 
from $\QM[X^*][[t]]$ to $A[[t]]$ by defining $\sigma^{\bf A}_t(t)=t$.

\section{The Study determinant of a quaternionic matrix}
Let $\HM$ be the set of quaternions. $\HM$ is a noncommutative associative 
algebra over $\RM$, whose underlying real vector space has dimension $4$ 
with a basis $1,i,j,k$ which satisfy the following relations: 
\[
i^2=j^2=k^2=-1,\quad ij=-ji=k,\quad jk=-kj=i,\quad ki=-ik=j.
\]
For $x=x_0+x_1i+x_2j+x_3k{\;\in\;}\HM$, 
$x^*$ denotes the conjugate of $x$ in $\HM$ 
which is defined by $x^*=x_0-x_1i-x_2j-x_3k$, 
and $\RP x=x_0$ the real part of $x$. 
One can easily check $xx^*=x^*x$, $(x^*)^n=(x^n)^*$, and 
$x^{-1}=x^*/|x|^2$ for $x{\;\neq\;}0$. 
Hence, $\HM$ constitutes a skew field. 
We call $|x|=\sqrt{xx^*}=\sqrt{x^*x}=\sqrt{x_0^2+x_1^2+x_2^2+x_3^2}$ the norm of $x$. 
Indeed, $|{\,\cdot\,}|$ satisfies 
\begin{enumerate}
\renewcommand{\labelenumi}{\rm (\arabic{enumi})}
\setlength{\itemsep}{-1em}
\item $|x|{\;\geq\;}0$, and moreover $|x|=0 \Leftrightarrow x=0$,\\
\item $|xy|=|x||y|$,\\
\item $|x+y|{\;\leq\;}|x|+|y|$.
\end{enumerate}
Any quaternion $x$ can be presented by two complex numbers 
$x=a+jb$ uniquely. Such a presentation is called the 
{\it symplectic decomposition}. 
Two complex numbers $a$ and $b$ are called the {\it simplex part} and the 
{\it perplex part} 
of $x$ respectively. 
The symplectic decomposition is also valid for a quaternionic matrix, namely 
a matrix whose entries are quaternions. 
$\Mat(m,n,\HM)$ denotes the set of $m{\times}n$ quaternionic matrices and 
$\Mat(m,\HM)$ the set of $m{\times}m$ quaternionic matrices. 
For ${\bf M}{\;\in\;}\Mat(m,n,\HM)$, we can write ${\bf M}={\bf M}^S+j{\bf M}^P$ 
uniquely where ${\bf M}^S,{\bf M}^P{\;\in\;}\Mat(m,n,\CM)$. 
${\bf M}^S$ and ${\bf M}^P$ are called the {\it simplex part} and the 
{\it perplex part} of ${\bf M}$ respectively. 
We define $\psi$ to be the map from $\Mat(m{\times}n,\mathbb{H})$ to 
$\Mat(2m{\times}2n,\mathbb{C})$ as follows: 
\[
\psi : \Mat(m{\times}n,\mathbb{H}){\;\longrightarrow\;}\Mat(2m{\times}2n,\mathbb{C})
\quad{\bf M}{\;\mapsto\;}\begin{bmatrix}{\bf M}^S&-\overline{{\bf M}^P}\\{\bf M}^P&
\overline{{\bf M}^S}\end{bmatrix},
\]
where $\overline{\bf A}$ is the complex conjugate of a complex matrix ${\bf A}$. 
Then $\psi$ is an $\RM$-linear map. We also have: 

\begin{lem}\label{PsiLem}
Let ${\bf M}{\;\in\;}\Mat(m{\times}n,\HM)$ and ${\bf N}{\;\in\;}
\Mat(n{\times}m,\HM)$. Then 
\[ \psi({\bf M}{\bf N})=\psi({\bf M})\psi({\bf N}). \]
\end{lem}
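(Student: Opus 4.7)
The plan is to prove the identity by expanding both sides using the symplectic decomposition and comparing block by block. The one nontrivial input is the commutation rule between the quaternionic unit $j$ and a complex number. From the basic relations $ij=k$ and $ji=-k$, one gets $ji=-ij$, and hence for any $a=a_0+ia_1\in\CM$ the identity $aj=j\bar a$ (equivalently $ja=\bar a j$). I would first record how this transfers to matrices with complex entries: for complex matrices $\mathbf A,\mathbf B$ of compatible sizes one has, entrywise,
\[
(j\mathbf A)\mathbf B = j(\mathbf A\mathbf B),\qquad \mathbf A(j\mathbf B)=j(\overline{\mathbf A}\,\mathbf B),\qquad (j\mathbf A)(j\mathbf B)=-\overline{\mathbf A}\,\mathbf B.
\]
These follow by writing the matrix products as sums of scalar products and repeatedly applying $aj=j\bar a$.

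Next, I would use the symplectic decompositions $\mathbf M=\mathbf M^S+j\mathbf M^P$ and $\mathbf N=\mathbf N^S+j\mathbf N^P$ with all superscript matrices complex, and expand the product
\[
\mathbf M\mathbf N=\mathbf M^S\mathbf N^S+\mathbf M^S(j\mathbf N^P)+(j\mathbf M^P)\mathbf N^S+(j\mathbf M^P)(j\mathbf N^P).
\]
Applying the three commutation rules above, each summand is rewritten in the canonical form (complex)$+j\,$(complex). Collecting terms, the symplectic decomposition of the product comes out as
\[
(\mathbf M\mathbf N)^S=\mathbf M^S\mathbf N^S-\overline{\mathbf M^P}\,\mathbf N^P,\qquad (\mathbf M\mathbf N)^P=\mathbf M^P\mathbf N^S+\overline{\mathbf M^S}\,\mathbf N^P.
\]

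Finally, I would compute the $2\times 2$ block product $\psi(\mathbf M)\psi(\mathbf N)$ directly from the definition of $\psi$. The four resulting blocks are, respectively, $\mathbf M^S\mathbf N^S-\overline{\mathbf M^P}\,\mathbf N^P$ in the $(1,1)$ position, $\mathbf M^P\mathbf N^S+\overline{\mathbf M^S}\,\mathbf N^P$ in the $(2,1)$ position, and the off-diagonal complex conjugates in the remaining positions (using $\overline{\mathbf A\mathbf B}=\overline{\mathbf A}\,\overline{\mathbf B}$). Matching these against $(\mathbf M\mathbf N)^S$, $(\mathbf M\mathbf N)^P$, $-\overline{(\mathbf M\mathbf N)^P}$ and $\overline{(\mathbf M\mathbf N)^S}$ yields $\psi(\mathbf M\mathbf N)=\psi(\mathbf M)\psi(\mathbf N)$.

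The only point demanding care is bookkeeping: one must track which complex matrices acquire a conjugation when $j$ is moved past them, and then reconcile these conjugations with the block entries of $\psi$. Beyond that, the argument is a direct computation, so no deeper obstacle is expected.
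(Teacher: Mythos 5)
Your proposal is correct and takes essentially the same route as the paper's proof: both use the symplectic decomposition ${\bf M}={\bf M}^S+j{\bf M}^P$, the commutation rule ${\bf X}j=j\overline{{\bf X}}$ to rewrite the four-term expansion of ${\bf M}{\bf N}$ in the form $(\text{complex})+j(\text{complex})$, and then match the resulting simplex and perplex parts against the blocks of $\psi({\bf M})\psi({\bf N})$. The computed blocks agree with the paper's, so there is nothing to correct.
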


\begin{proof} 
Let ${\bf M}={\bf A}+j{\bf B}$ and ${\bf N}={\bf C}+j{\bf D}$ 
be the symplectic decompositions of ${\bf M}$ and ${\bf N}$. Then, 
\[ {\bf M}{\bf N}=({\bf A}+j{\bf B})({\bf C}+j{\bf D})
={\bf A}{\bf C}+{\bf A}j{\bf D}+j{\bf B}{\bf C}+j{\bf B}j{\bf D}. \]
Since  
${\bf X}j=j\overline{\bf X}$ for every complex matrix ${\bf X}$, 
we obtain: 
\[ {\bf M}{\bf N}={\bf A}{\bf C}-\overline{{\bf B}}{\bf D}
+j(\overline{{\bf A}}{\bf D}+{\bf B}{\bf C}), \]
and therefore
\[ \psi({\bf M}{\bf N})
=\begin{bmatrix}{\bf A}{\bf C}-\overline{{\bf B}}{\bf D}
&-{\bf A}\overline{{\bf D}}-\overline{{\bf B}}\overline{{\bf C}}\\
\overline{{\bf A}}{\bf D}+{\bf B}{\bf C}
&\overline{{\bf A}}\overline{{\bf C}}-{\bf B}\overline{{\bf D}}
\end{bmatrix}.
\]
On the other hand, 
\[ \psi({\bf M})\psi({\bf N})=
\begin{bmatrix}{\bf A}&-\overline{{\bf B}}\\
{\bf B}&\overline{{\bf A}}
\end{bmatrix}
\begin{bmatrix}{\bf C}&-\overline{{\bf D}}\\
{\bf D}&\overline{{\bf C}}
\end{bmatrix}
=\begin{bmatrix}{\bf A}{\bf C}-\overline{{\bf B}}{\bf D}
&-{\bf A}\overline{{\bf D}}-\overline{{\bf B}}\overline{{\bf C}}\\
{\bf B}{\bf C}+\overline{{\bf A}}{\bf D}
&-{\bf B}\overline{{\bf D}}+\overline{{\bf A}}\overline{{\bf C}}
\end{bmatrix}.
\]
Thus $\psi({\bf M}{\bf N})=\psi({\bf M})\psi({\bf N})$ holds. 
\end{proof}

\begin{pro}
If $m=n$, then $\psi$ is an injective $\mathbb{R}$-algebra homomorphism. 
\end{pro}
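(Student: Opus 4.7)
The proposition collects three assertions about $\psi$ restricted to square matrices: $\mathbb{R}$-linearity, multiplicativity, and injectivity; plus preservation of the identity to promote it from ring homomorphism to $\mathbb{R}$-algebra homomorphism. All three ingredients are essentially already in hand, so my plan is just to assemble them cleanly.

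First I would recall that $\psi$ was defined as an $\mathbb{R}$-linear map in the paragraph preceding Lemma \ref{PsiLem}, and that Lemma \ref{PsiLem} gives $\psi({\bf MN})=\psi({\bf M})\psi({\bf N})$ for ${\bf M}\in\Mat(m{\times}n,\HM)$ and ${\bf N}\in\Mat(n{\times}m,\HM)$. In the square case $m=n$ this produces two matrices in $\Mat(n,\HM)$ whose product is again in $\Mat(n,\HM)$, so multiplicativity of $\psi$ on $\Mat(n,\HM)$ follows by specialization. Combined with $\RM$-linearity, this already makes $\psi$ an $\RM$-algebra homomorphism of non-unital algebras.

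Next I would verify that $\psi$ is unital. The symplectic decomposition of ${\bf I}_n$ is ${\bf I}_n+j\cdot{\bf O}_n$, so ${\bf I}_n^S={\bf I}_n$ and ${\bf I}_n^P={\bf O}_n$, whence
\[
\psi({\bf I}_n)=\begin{bmatrix}{\bf I}_n & {\bf O}_n\\ {\bf O}_n & {\bf I}_n\end{bmatrix}={\bf I}_{2n}.
\]
Combined with the previous step this makes $\psi$ an $\RM$-algebra homomorphism from $\Mat(n,\HM)$ to $\Mat(2n,\CM)$.

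Finally I would handle injectivity via the uniqueness of the symplectic decomposition. By $\RM$-linearity it suffices to check that $\ker\psi=\{{\bf O}_n\}$. If $\psi({\bf M})={\bf O}_{2n}$, then reading off the two blocks in the top row forces ${\bf M}^S={\bf O}_n$ and $\overline{{\bf M}^P}={\bf O}_n$, i.e.\ ${\bf M}^P={\bf O}_n$. Since ${\bf M}={\bf M}^S+j{\bf M}^P$ is the unique symplectic decomposition, this gives ${\bf M}={\bf O}_n$, which completes the argument. There is no genuine obstacle here; the only thing to be careful about is distinguishing the square case (where source and target are both algebras and the question of unitality is meaningful) from the rectangular case treated in Lemma \ref{PsiLem}.
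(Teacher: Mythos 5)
Your proposal is correct and follows essentially the same route as the paper, which simply cites Lemma \ref{PsiLem} for the homomorphism property and declares injectivity clear; you merely fill in the routine details (unitality and the triviality of the kernel via uniqueness of the symplectic decomposition). No substantive difference in approach.
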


\begin{proof} 
By Lemma \ref{PsiLem}, $\psi$ is an $\RM$-algebra homomorphism. 
Injectivity of $\psi$ is clear. 
\end{proof}

As in \cite{Aslaksen1996}, 
one can characterize the image of $\psi$ in $\Mat(2n,\CM)$ as follows. 
\begin{lem}\label{ImageOfPsi}
Let ${\bf J}$ be the $2n{\times}2n$ matrix defined as follows: 
\[
{\bf J}=\begin{bmatrix}
{\bf 0} & -{\bf I}_n \\ {\bf I}_n & {\bf 0}
\end{bmatrix}.
\]
Then, 
\[
\psi(\Mat(n,\HM))=\{{\bf N}{\;\in\;}\Mat(2n,\CM)\,|\, 
{\bf J}{\bf N}=\overline{\bf N}{\bf J}\},
\]
where $\overline{\bf N}$ denotes the complex conjugate of ${\bf N}$. 
\end{lem}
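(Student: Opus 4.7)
The plan is to establish the two inclusions separately, both by direct block-matrix computation.

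For the inclusion $\psi(\Mat(n,\HM))\subseteq\{{\bf N}\mid {\bf J}{\bf N}=\overline{\bf N}{\bf J}\}$, I would take an arbitrary ${\bf M}={\bf A}+j{\bf B}\in\Mat(n,\HM)$ and simply compute both ${\bf J}\psi({\bf M})$ and $\overline{\psi({\bf M})}{\bf J}$ using the block form of $\psi({\bf M})=\begin{bmatrix}{\bf A}&-\overline{\bf B}\\{\bf B}&\overline{\bf A}\end{bmatrix}$. Multiplying on the left by ${\bf J}$ swaps the block rows and negates the new top row, yielding $\begin{bmatrix}-{\bf B}&-\overline{\bf A}\\{\bf A}&-\overline{\bf B}\end{bmatrix}$. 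On the other hand, $\overline{\psi({\bf M})}=\begin{bmatrix}\overline{\bf A}&-{\bf B}\\\overline{\bf B}&{\bf A}\end{bmatrix}$, and right-multiplying by ${\bf J}$ swaps its block columns and negates the new left column, giving the same matrix. This establishes one inclusion.

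For the reverse inclusion, I would take ${\bf N}=\begin{bmatrix}{\bf P}&{\bf Q}\\{\bf R}&{\bf S}\end{bmatrix}\in\Mat(2n,\CM)$ with ${\bf P},{\bf Q},{\bf R},{\bf S}\in\Mat(n,\CM)$, impose the relation ${\bf J}{\bf N}=\overline{\bf N}{\bf J}$, and read off the constraints block-wise. Computing both sides as in the previous paragraph produces the identifications ${\bf S}=\overline{\bf P}$ and ${\bf Q}=-\overline{\bf R}$, so that ${\bf N}=\begin{bmatrix}{\bf P}&-\overline{\bf R}\\{\bf R}&\overline{\bf P}\end{bmatrix}=\psi({\bf P}+j{\bf R})$, which exhibits ${\bf N}$ as lying in the image of $\psi$.

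There is no real obstacle here: the statement reduces entirely to two $2\times 2$ block computations. The only mild care needed is to keep track of the complex conjugates consistently and to avoid sign errors when moving ${\bf J}$ past a block matrix on either side. Once the two inclusions are verified, the set equality follows, proving the lemma.
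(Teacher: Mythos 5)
Your argument is correct and complete: both block computations check out, and the reverse inclusion correctly recovers ${\bf N}=\psi({\bf P}+j{\bf R})$ from the constraints ${\bf S}=\overline{\bf P}$, ${\bf Q}=-\overline{\bf R}$. Note that the paper itself gives no proof of this lemma, deferring entirely to Aslaksen's survey, so you have actually supplied the (standard) direct verification that the authors omit. One trivial slip in your prose: right-multiplication by ${\bf J}$ sends $\begin{bmatrix}X & Y\end{bmatrix}$ to $\begin{bmatrix}Y & -X\end{bmatrix}$, i.e.\ it negates the new \emph{right} block column, not the left one; the matrix you actually write down, $\begin{bmatrix}-{\bf B}&-\overline{\bf A}\\{\bf A}&-\overline{\bf B}\end{bmatrix}$, is nevertheless the correct value of $\overline{\psi({\bf M})}{\bf J}$, so nothing downstream is affected.
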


\begin{proof} 
For the proof, see \cite{Aslaksen1996}. 
\end{proof}

In \cite{Study1920}, Study defined a determinant of a $n{\times}n$ 
quaternionic matrix which we denote by $\Sdet$ as follows: 
\[
\Sdet({\bf M})=\det(\psi({\bf M})),
\]
where $\det$ is the ordinary determinant. 
$\Sdet$ is called the {\em Study determinant}. 

$\Sdet$ can be extended to the one for matrices whose entries are formal power series 
with coefficients in $\HM$ in the following way. 
Let $\HM[[t]]$ be the ring of formal power series with 
coefficients in $\HM$. 
We notice that $t$ is a commuting indeterminate for $\HM$, that is, 
$th=ht$ for any $h{\;\in\;}\HM$. 
For $\alpha=\sum_{k{\geq}0}\alpha_kt^k{\;\in\;}\HM[[t]]$, the {\it conjugate} of $\alpha$ 
is defined by $\alpha^*=\sum_{k{\geq}0}\alpha_k^*t^k$. 
Furthermore, let $\alpha_k=\alpha_k^{S}+j\alpha_k^{P}$ be the symplectic decomposition 
of $\alpha_k$. Then the symplectic decomposition of $\alpha$ is defined by 
\begin{equation*}
\alpha=\alpha^{S}+j\alpha^{P}=\sum_{k{\geq}0}\alpha_k^{S}t^k+j\sum_{k{\geq}0}\alpha_k^{P}t^k.
\end{equation*}
One can extend $\psi$ to the injective homomorphism $\psi_t$ of $\RM$-algebras from 
$\Mat(n,\HM)[[t]]$ to $\Mat(2n,\CM)[[t]]$ by defining $\psi_t(t)=t$. 
On the other hand, since $\Mat(2n,\CM)[[t]] = \Mat(2n,\CM[[t]])$, 
one can also extend the determinant $\det : \Mat(2n,\CM){\longrightarrow}\CM$ 
to $\det_t : \Mat(2n,\CM)[[t]]{\longrightarrow}\CM[[t]]$ in the natural manner 
since $\det$ is a polynomial of entries of a matrix.  
Both $\psi_t$ and $\det_t$ are continuous, thus 
\[
{\det}_t{\cdot}\psi_t : \Mat(n,\HM)[[t]]{\longrightarrow}\CM[[t]]
\]
is also continuous with respect to the topology of formal power series. 
We call $\det_t{\cdot}\psi_t$ the {\it Study determinant} for $\Mat(n,\HM[[t]])$ 
and denote by $\Sdet_t$. 
We notice that the restriction of $\Sdet_t$ to $\Mat(n,\HM)$ yields $\Sdet$.

Before stating properties of $\Sdet_t$, we mention some useful facts. 
\begin{lem}\label{DetPartitionedMat}
If ${\bf A}, {\bf B}, {\bf C}, {\bf D}$ are complex square matrices with same size. 
Suppose that ${\bf A}$ is invertible and ${\bf A}{\bf C}={\bf C}{\bf A}$, then 
\[
\det  
\left[ 
\begin{array}{cc}
{\bf A} & {\bf B} \\
{\bf C} & {\bf D}   
\end{array} 
\right] 
= \det ({\bf A}{\bf D}-{\bf C}{\bf B}) . 
\]
\end{lem}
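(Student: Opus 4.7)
The plan is to reduce the statement to the standard Schur complement formula for a block matrix and then use the commutation hypothesis $\mathbf{A}\mathbf{C}=\mathbf{C}\mathbf{A}$ to massage the Schur complement into the asserted form $\det(\mathbf{A}\mathbf{D}-\mathbf{C}\mathbf{B})$.

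First, I would write down the block $LU$-type factorization
\[
\begin{bmatrix}\mathbf{A}&\mathbf{B}\\ \mathbf{C}&\mathbf{D}\end{bmatrix}
=
\begin{bmatrix}\mathbf{A}&\mathbf{0}\\ \mathbf{C}&\mathbf{I}\end{bmatrix}
\begin{bmatrix}\mathbf{I}&\mathbf{A}^{-1}\mathbf{B}\\ \mathbf{0}&\mathbf{D}-\mathbf{C}\mathbf{A}^{-1}\mathbf{B}\end{bmatrix},
\]
which is available precisely because $\mathbf{A}$ is assumed invertible. Both factors are block triangular, so taking ordinary determinants gives
\[
\det\begin{bmatrix}\mathbf{A}&\mathbf{B}\\ \mathbf{C}&\mathbf{D}\end{bmatrix}
=\det(\mathbf{A})\,\det(\mathbf{D}-\mathbf{C}\mathbf{A}^{-1}\mathbf{B}).
\]
This is a routine verification and should occupy only a line or two in the write-up.

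The second and genuinely substantive step is where the hypothesis $\mathbf{A}\mathbf{C}=\mathbf{C}\mathbf{A}$ enters. Using multiplicativity of the ordinary determinant, the right-hand side equals $\det\bigl(\mathbf{A}(\mathbf{D}-\mathbf{C}\mathbf{A}^{-1}\mathbf{B})\bigr)=\det(\mathbf{A}\mathbf{D}-\mathbf{A}\mathbf{C}\mathbf{A}^{-1}\mathbf{B})$; the commutation relation then yields $\mathbf{A}\mathbf{C}\mathbf{A}^{-1}=\mathbf{C}\mathbf{A}\mathbf{A}^{-1}=\mathbf{C}$, so the expression collapses to $\det(\mathbf{A}\mathbf{D}-\mathbf{C}\mathbf{B})$, as required.

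There is no real obstacle: the lemma is a standard Schur-complement identity combined with a one-line algebraic reduction using the commutation of $\mathbf{A}$ and $\mathbf{C}$. The only point needing a small amount of care is the placement of the factor $\mathbf{A}$ on the \emph{left} of $(\mathbf{D}-\mathbf{C}\mathbf{A}^{-1}\mathbf{B})$ rather than on the right, since this is what forces the conclusion to read $\mathbf{A}\mathbf{D}-\mathbf{C}\mathbf{B}$ instead of $\mathbf{D}\mathbf{A}-\mathbf{B}\mathbf{C}$; any other ordering would require an additional commutation hypothesis.
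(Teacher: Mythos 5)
Your proof is correct: the block $LU$ factorization, the multiplicativity of $\det$, and the use of $\mathbf{A}\mathbf{C}=\mathbf{C}\mathbf{A}$ to rewrite $\mathbf{A}\mathbf{C}\mathbf{A}^{-1}\mathbf{B}$ as $\mathbf{C}\mathbf{B}$ all check out. The paper itself gives no proof, deferring to the reference \cite{Zhang2011}, and your Schur-complement argument is the standard one found there, so there is nothing to compare beyond noting that you have supplied the details the paper omits.
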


\begin{proof} 
For the proof, see for example \cite{Zhang2011}. 
\end{proof}

\begin{rem}\label{ExtDetPartitionedMat}
We notice that the Lemma \ref{DetPartitionedMat} can be extended to matrices whose entries 
belong to $\CM[[t]]$ immediately. 
\end{rem}

\begin{lem}\label{ImageOfPsit}
\[
\psi_t(\Mat(n,\HM[[t]]))=\{{\bf N}{\;\in\;}\Mat(2n,\CM[[t]])\,|\, 
{\bf J}{\bf N}=\overline{\bf N}{\bf J}\}.
\]
\end{lem}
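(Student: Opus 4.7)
The plan is to reduce the statement to Lemma \ref{ImageOfPsi} by extracting coefficients of $t^k$ on both sides. The key observation is that $\psi_t$ acts coefficient-wise: since it is the $\RM$-algebra homomorphism extending $\psi$ by $\psi_t(t)=t$, for any ${\bf M}=\sum_{k\geq 0}{\bf M}_kt^k \in \Mat(n,\HM)[[t]]$ we have $\psi_t({\bf M})=\sum_{k\geq 0}\psi({\bf M}_k)t^k$. Likewise, complex conjugation on $\Mat(2n,\CM[[t]])$ is defined coefficient-wise, and ${\bf J}$ has entries in $\RM$, so both ${\bf J}(\,\cdot\,)$ and $\overline{(\,\cdot\,)}{\bf J}$ preserve the $t^k$-grading.

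For the inclusion $\psi_t(\Mat(n,\HM[[t]]))\subseteq\{{\bf N}\mid {\bf J}{\bf N}=\overline{\bf N}{\bf J}\}$, I would take ${\bf M}=\sum_k{\bf M}_kt^k$ with ${\bf M}_k\in\Mat(n,\HM)$, apply Lemma \ref{ImageOfPsi} to each coefficient to get ${\bf J}\psi({\bf M}_k)=\overline{\psi({\bf M}_k)}{\bf J}$, multiply by $t^k$, and sum. Since $t$ is central and real, the two sides give precisely ${\bf J}\psi_t({\bf M})=\overline{\psi_t({\bf M})}{\bf J}$.

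For the reverse inclusion, I would take ${\bf N}=\sum_k{\bf N}_kt^k\in\Mat(2n,\CM[[t]])$ satisfying ${\bf J}{\bf N}=\overline{\bf N}{\bf J}$. Comparing coefficients of $t^k$ yields ${\bf J}{\bf N}_k=\overline{{\bf N}_k}{\bf J}$ for every $k$. By Lemma \ref{ImageOfPsi}, each ${\bf N}_k$ lies in $\psi(\Mat(n,\HM))$, say ${\bf N}_k=\psi({\bf M}_k)$ with ${\bf M}_k\in\Mat(n,\HM)$ (uniquely determined since $\psi$ is injective). Setting ${\bf M}:=\sum_k{\bf M}_kt^k\in\Mat(n,\HM)[[t]]=\Mat(n,\HM[[t]])$ gives $\psi_t({\bf M})={\bf N}$ as desired.

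There is no real obstacle here; the only thing to be careful about is formally verifying that the two operations ${\bf N}\mapsto{\bf J}{\bf N}$ and ${\bf N}\mapsto\overline{{\bf N}}{\bf J}$ commute with the extraction of $t^k$-coefficients, which is immediate since ${\bf J}$ has real (and constant in $t$) entries and conjugation is defined termwise. With this in place, the lemma is an entirely formal consequence of Lemma \ref{ImageOfPsi} applied degree by degree.
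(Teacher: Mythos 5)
Your proof is correct and follows essentially the same route as the paper: the paper's argument is likewise a coefficient-wise reduction of the identity ${\bf J}{\bf N}=\overline{\bf N}{\bf J}$ to Lemma \ref{ImageOfPsi}, using that $\psi_t$ acts termwise on power series. Your version merely spells out the bookkeeping (that ${\bf J}$ is constant and real and that conjugation is termwise) a bit more explicitly.
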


\begin{proof}  By Lemma \ref{ImageOfPsi}, it follows that
\begin{equation*}
\begin{split}
&\exists {\bf M}=\sum_{k{\geq}0}{\bf M}_kt^k{\;\in\;}\Mat(n,\HM[[t]]),\;
\psi_t({\bf M})={\bf N}=\sum_{k{\geq}0}{\bf N}_kt^k\\
&{\Leftrightarrow\ } \psi({\bf M}_k)={\bf N}_k \quad \text{for ${\forall}k{\geq}0$}
{\ \Leftrightarrow\ } {\bf J}{\bf N}_k=\overline{\bf N}_k{\bf J} \quad 
\text{for ${\forall}k{\geq}0$}\\
&{\Leftrightarrow\ }{\bf J}{\bf N}=\overline{\bf N}{\bf J}.
\end{split}
\end{equation*}
Hence, the assertion holds. 
\end{proof}

\begin{lem}\label{ProdOfFAndConjF}
Let $\alpha=\sum_{k{\geq}0}\alpha_kt^k{\;\in\;}\HM[[t]]$, and 
$\alpha_k=\alpha_k^{S}+j\alpha_k^{P}$ 
the symplectic decomposition. Then, 
\[
\alpha\alpha^*=\alpha^*\alpha=
\sum_{m{\geq}0}\sum_{k+\ell=m}
(\alpha_k^{S}\overline{\alpha_{\ell}^{S}}+\alpha_k^{P}\overline{\alpha_{\ell}^{P}})t^m 
=\alpha^{S}\overline{\alpha^{S}}+\alpha^{P}\overline{\alpha^{P}}{\;\in\;}\CM[[t]].
\]
\end{lem}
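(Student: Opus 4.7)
The plan is a direct computation in the coefficients, with the main subtlety being that $j$ does not commute with complex numbers. I will first sort out what conjugation does to a single quaternion in symplectic form, then expand the Cauchy product and use a $k \leftrightarrow \ell$ symmetrization to kill the perplex part.

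First I would observe that if $\alpha_k = \alpha_k^S + j\alpha_k^P$ with $\alpha_k^S,\alpha_k^P{\in}\CM$, then the definition of $*$ on $\HM$ (together with the basis relations) yields
\[
\alpha_k^* = \overline{\alpha_k^S} - j\alpha_k^P,
\]
so $\alpha^* = \sum_{k\geq 0}(\overline{\alpha_k^S} - j\alpha_k^P)t^k$. I would also record the key commutation rule $aj = j\bar a$ for $a{\in}\CM$, which gives $j a j = -\bar a$ for $a{\in}\CM$.

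Next, I would expand
\[
\alpha\alpha^* = \sum_{m\geq 0}\Bigl(\sum_{k+\ell=m}\alpha_k\alpha_\ell^*\Bigr)t^m,
\]
and compute each inner product using the symplectic decomposition:
\[
\alpha_k\alpha_\ell^* = (\alpha_k^S + j\alpha_k^P)(\overline{\alpha_\ell^S} - j\alpha_\ell^P)
= \bigl(\alpha_k^S\overline{\alpha_\ell^S} + \overline{\alpha_k^P}\alpha_\ell^P\bigr) + j\bigl(\alpha_k^P\overline{\alpha_\ell^S} - \overline{\alpha_k^S}\alpha_\ell^P\bigr),
\]
where the commutation rule is applied to $\alpha_k^S j = j\overline{\alpha_k^S}$ and to $j\alpha_k^P j = -\overline{\alpha_k^P}$.

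The decisive step is then to sum over $k+\ell=m$ and use the $k\leftrightarrow\ell$ swap together with commutativity in $\CM$. In the simplex part, $\sum_{k+\ell=m}\overline{\alpha_k^P}\alpha_\ell^P = \sum_{k+\ell=m}\alpha_k^P\overline{\alpha_\ell^P}$, so the simplex coefficient is exactly $\sum_{k+\ell=m}(\alpha_k^S\overline{\alpha_\ell^S} + \alpha_k^P\overline{\alpha_\ell^P})$; in the perplex part, the same swap identifies $\sum_{k+\ell=m}\overline{\alpha_k^S}\alpha_\ell^P$ with $\sum_{k+\ell=m}\alpha_k^P\overline{\alpha_\ell^S}$, so the coefficient of $j$ vanishes. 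Recognizing the surviving sum as the Cauchy product of $\alpha^S\overline{\alpha^S}$ and $\alpha^P\overline{\alpha^P}$ in $\CM[[t]]$ gives the displayed identity, and in particular shows $\alpha\alpha^*{\in}\CM[[t]]$.

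For $\alpha^*\alpha$ the calculation is exactly symmetric: expanding $\alpha_k^*\alpha_\ell$ yields a simplex part $\overline{\alpha_k^S}\alpha_\ell^S + \overline{\alpha_k^P}\alpha_\ell^P$ and a perplex coefficient $\alpha_k^S\alpha_\ell^P - \alpha_k^P\alpha_\ell^S$, both of which reduce to the same expressions as above after the $k\leftrightarrow\ell$ swap in $\CM$. The only real obstacle is the careful bookkeeping of where $j$ sits relative to complex scalars; once $aj=j\bar a$ is applied consistently, the whole statement collapses to the symmetrization in the Cauchy sum.
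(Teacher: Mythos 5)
Your proof is correct and follows essentially the same direct coefficient computation as the paper's: conjugate in symplectic form using $aj=j\bar a$, expand the Cauchy product, and observe that the perplex part cancels. The only difference is that you make the $k\leftrightarrow\ell$ symmetrization explicit (both for killing the $j$-component and for matching the simplex part to $\alpha^{S}\overline{\alpha^{S}}+\alpha^{P}\overline{\alpha^{P}}$), a step the paper performs silently between displayed lines.
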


\begin{proof}  By a direct calculation, we have: 
\begin{equation*}
\begin{split}
\alpha\alpha^*&=(\sum_{k{\geq}0}(\alpha_k^{S}+j\alpha_k^{P})t^k)
(\sum_{\ell{\geq}0}({\alpha_{\ell}^{S}}+j\alpha_{\ell}^{P})^*t^{\ell})\\
&=(\sum_{k{\geq}0}(\alpha_k^{S}+j\alpha_k^{P})t^k)
(\sum_{\ell{\geq}0}(\overline{\alpha_{\ell}^{S}}-\overline{\alpha_{\ell}^{P}}j)t^{\ell})\\
&=\sum_{m{\geq}0}\sum_{k+\ell=m}
\Big{\{}(\alpha_k^{S}\overline{\alpha_{\ell}^{S}}+\overline{\alpha_k^{P}}\alpha_{\ell}^{P})
-j(\overline{\alpha_k^{S}}\alpha_{\ell}^{P}-\alpha_k^{P}\overline{\alpha_{\ell}^{S}})
\Big{\}}t^m\\
&=\sum_{m{\geq}0}\sum_{k+\ell=m}
(\alpha_k^{S}\overline{\alpha_{\ell}^{S}}+\overline{\alpha_k^{P}}\alpha_{\ell}^{P})t^m\\
&=\alpha^{S}\overline{\alpha^{S}}+\alpha^{P}\overline{\alpha^{P}}. 
\end{split}
\end{equation*}
Furthermore, $\alpha^{S}\overline{\alpha^{S}}+\alpha^{P}\overline{\alpha^{P}}$ belongs to 
$\CM[[t]]$. 
Similarly, the same conclusion holds for $\alpha^*\alpha$. 
\end{proof}

Lemma \ref{ProdOfFAndConjF} immediately implies: 
\begin{cor}\label{CommutativityOfAlpahConjAlpha}
For all $\alpha,\beta{\;\in\;}\HM[[t]]$, 
$\alpha\alpha^*\beta\beta^*=\beta\beta^*\alpha\alpha^*$. 
\end{cor}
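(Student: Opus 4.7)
The plan is to observe that this corollary is essentially immediate from Lemma \ref{ProdOfFAndConjF}, so the proof will be very short, with no real obstacle. The lemma tells us that for any quaternionic formal power series $\alpha \in \HM[[t]]$, the product $\alpha\alpha^*$ actually lies in the subring $\CM[[t]] \subset \HM[[t]]$, because the computation performed in the lemma shows that the $j$-component of the symplectic decomposition cancels.

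First I would apply Lemma \ref{ProdOfFAndConjF} to $\alpha$ and then to $\beta$, obtaining $\alpha\alpha^* \in \CM[[t]]$ and $\beta\beta^* \in \CM[[t]]$. Then I would invoke the fact that $\CM[[t]]$ is a commutative ring: the complex numbers commute, $t$ is a central indeterminate in $\HM[[t]]$ (hence in particular in $\CM[[t]]$), so any two elements of $\CM[[t]]$ commute. Applying this to our two elements gives $(\alpha\alpha^*)(\beta\beta^*) = (\beta\beta^*)(\alpha\alpha^*)$, which is exactly the claim.

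There is really no step I expect to be difficult here; the entire content of the corollary is encoded in the assertion of the lemma that $\alpha\alpha^*$ is complex-valued rather than quaternion-valued. The only thing one could add, for completeness, is a brief remark that multiplication in $\HM[[t]]$ restricted to $\CM[[t]]$ coincides with the usual multiplication in $\CM[[t]]$, which is manifest since $\CM \subset \HM$ is a subring and $t$ commutes with every element of $\HM$ by definition of the formal power series ring.
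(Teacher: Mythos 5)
Your proof is correct and matches the paper's reasoning exactly: the paper derives the corollary as an immediate consequence of Lemma \ref{ProdOfFAndConjF}, the point being precisely that $\alpha\alpha^*$ and $\beta\beta^*$ both lie in the commutative subring $\CM[[t]]$ of $\HM[[t]]$ and therefore commute. You have simply spelled out the one-line argument the paper leaves implicit.
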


\begin{pro}\label{SdettProperties}
\begin{enumerate}
\renewcommand{\labelenumi}{\rm (\roman{enumi})}
\item
$\Sdet_t({\bf M}){\;\in\;}\RM[[t]]$ 
for ${\bf M}{\;\in\;}\Mat(n,\HM[[t]])$

\item
The constant term of $\Sdet_t({\bf M})$ is equal to 
$0{\ \Leftrightarrow\ }{\bf M} \text{ is singular.}$

\item
$\Sdet_t({\bf MN})=\Sdet_t({\bf M})\Sdet_t({\bf N})$ for 
${\bf M},{\bf N}{\;\in\;}\Mat(n,\HM[[t]])$.

\item
If ${\bf N}$ is obtained from ${\bf M}$ by adding a left-multiple of a row 
to another row or a right-multiple of a column to another column, 
then $\Sdet_t({\bf N})=\Sdet_t({\bf M})$. 

\item
If ${\bf N}$ is obtained from ${\bf M}$ by exchanging a row 
with another row or a column with another column, 
then $\Sdet_t({\bf N})=\Sdet_t({\bf M})$. 

\item
$\Sdet_t(\alpha{\bf M})=\Sdet_t({\bf M}\alpha)=(\alpha\alpha^*)^{n}
\Sdet_t({\bf M})$ 
for ${\bf M}{\;\in\;}\Mat(n,\HM[[t]])$ and 
$\alpha{\;\in\;}\HM[[t]]$. 

\item
If ${\bf M}{\;\in\;}\Mat(n,\HM[[t]])$ is of the form:
\[
{\bf M}=
\begin{bmatrix}
\lambda_1 & * &{\cdots}& * \\
0 & \lambda_2 & & * \\
{\vdots}& & \ddots &{\vdots}\\
0 & 0 &{\cdots}&\lambda_n
\end{bmatrix} \quad\text{or}\quad 
\begin{bmatrix}
\lambda_1 & 0 &{\cdots}& 0 \\
* & \lambda_2 & & 0 \\
{\vdots}& & \ddots &{\vdots}\\
* & * &{\cdots}&\lambda_n
\end{bmatrix},
\]
Then, $\Sdet_t({\bf M})=\prod_{i=1}^n\lambda_i\lambda_i^*$.

\item  Let ${\bf A}$ be an $m \times n$ matrix and ${\bf B}$ an $n \times m$ matrix.
Then 
\[
\Sdet_t ({\bf I}_m -{\bf A}{\bf B})= \Sdet_t ({\bf I}_n -{\bf B}{\bf A}) . 
\]
\end{enumerate}
\end{pro}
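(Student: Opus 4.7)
The overall plan is to reduce everything to properties of the ordinary complex determinant $\det_t$ on $\Mat(2n,\CM[[t]])$ via the factorization $\Sdet_t = \det_t \circ \psi_t$, where $\psi_t$ is an injective $\RM$-algebra homomorphism. Parts (ii) and (iii) are almost immediate: (iii) follows from multiplicativity of both $\psi_t$ and $\det_t$; for (ii), the constant term of $\Sdet_t({\bf M})$ equals $\Sdet({\bf M}_0)$ where ${\bf M}_0$ is the constant term of ${\bf M}$, which by Study's axiom (A1) vanishes iff ${\bf M}_0$, and hence ${\bf M}$, is singular. For (i), I would apply $\det_t$ to the identity ${\bf J}{\bf N} = \overline{{\bf N}}{\bf J}$ of Lemma \ref{ImageOfPsit} with ${\bf N} = \psi_t({\bf M})$: since $\det({\bf J}) \neq 0$ and $\det_t$ commutes with coefficient-wise complex conjugation, this forces $\det_t({\bf N}) = \overline{\det_t({\bf N})}$, so $\Sdet_t({\bf M}) \in \RM[[t]]$.

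The engine for (iv), (v), (vi), (vii) is what I will call the \emph{reshuffle trick}: simultaneously permuting the rows and columns of $\psi_t({\bf M})$ by the perfect shuffle $(1, n{+}1, 2, n{+}2, \ldots, n, 2n)$ yields a matrix whose $(i,j)$-th $2\times 2$ block is exactly $\psi(M_{ij})$; since the same permutation is applied to rows and columns, the determinant is unchanged. For triangular ${\bf M}$, this reshuffled matrix is block triangular with diagonal blocks $\psi(\lambda_i)$ of determinant $\lambda_i\lambda_i^*$ (using Lemma \ref{ProdOfFAndConjF}), proving (vii). Then (iv) follows because the elementary operation corresponds to multiplication by an ${\bf I}_n + c{\bf E}_{ji}$ which is triangular with $1$'s on the diagonal, hence $\Sdet_t = 1$ by (vii); combine with (iii). (v) uses $\psi(P) = \begin{bmatrix} P & {\bf 0} \\ {\bf 0} & P \end{bmatrix}$ for a real permutation matrix $P$, whose determinant is $(\det P)^2 = 1$. (vi) reduces via (iii) to $\Sdet_t(\alpha{\bf I}_n) = (\alpha\alpha^*)^n$, a direct consequence of (vii) applied to the diagonal matrix $\alpha{\bf I}_n$.

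For the Sylvester identity (viii), I would use the classical block factorizations
\[
\begin{bmatrix} {\bf I}_m & {\bf A} \\ {\bf 0} & {\bf I}_n \end{bmatrix}
\begin{bmatrix} {\bf I}_m - {\bf A}{\bf B} & {\bf 0} \\ {\bf B} & {\bf I}_n \end{bmatrix}
= \begin{bmatrix} {\bf I}_m & {\bf A} \\ {\bf B} & {\bf I}_n \end{bmatrix}
= \begin{bmatrix} {\bf I}_m & {\bf 0} \\ {\bf B} & {\bf I}_n \end{bmatrix}
\begin{bmatrix} {\bf I}_m & {\bf A} \\ {\bf 0} & {\bf I}_n - {\bf B}{\bf A} \end{bmatrix}.
\]
The outer factors are scalar-triangular with $1$'s on the diagonal, so by (vii) their $\Sdet_t$ equals $1$. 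By (iv) the inner factors can be column-cleared using the identity block to become block-diagonal matrices $\begin{bmatrix} {\bf I}_m - {\bf A}{\bf B} & {\bf 0} \\ {\bf 0} & {\bf I}_n \end{bmatrix}$ and $\begin{bmatrix} {\bf I}_m & {\bf 0} \\ {\bf 0} & {\bf I}_n - {\bf B}{\bf A} \end{bmatrix}$; a second application of the reshuffle trick shows their $\Sdet_t$ equal $\Sdet_t({\bf I}_m - {\bf A}{\bf B})$ and $\Sdet_t({\bf I}_n - {\bf B}{\bf A})$ respectively, and (iii) closes the argument. The main obstacle throughout is that $\HM[[t]]$ is not a field, so standard determinant manipulations invoking Schur complements or block inversion are unavailable; the reshuffle trick is the essential workaround precisely because it operates entrywise and never requires a division.
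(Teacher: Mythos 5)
Your proposal is correct, and while parts (i), (iii) and (v) coincide with the paper's arguments, several other parts take a genuinely different route. The paper proves (iv), (vi) and (vii) by direct computation on $\psi_t({\bf M})$: for (iv) and (vi) it applies the commuting-block determinant formula of Lemma~\ref{DetPartitionedMat} to the four blocks of $\psi_t$, and for (vii) it performs an iterated cofactor expansion along first columns. Your perfect-shuffle conjugation, which turns $\psi_t({\bf M})$ into the array of $2\times 2$ blocks $\bigl(\psi(M_{ij})\bigr)_{i,j}$ without changing the determinant, replaces all three computations by a single observation and then derives (iv) and (vi) formally from (vii) and (iii); this is cleaner, and there is no circularity since your (vii) uses only Lemma~\ref{ProdOfFAndConjF}. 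For (ii) the paper argues via invertibility of $\det_t\psi_t({\bf M})$ in $\CM[[t]]$ and uses Lemma~\ref{ImageOfPsit} to pull $\psi_t({\bf M})^{-1}$ back into $\psi_t(\Mat(n,\HM[[t]]))$, whereas you pass to constant terms and invoke axiom (A1); both work, but you should state explicitly the fact you are using, namely that ${\bf M}$ is invertible over $\HM[[t]]$ if and only if ${\bf M}_0$ is invertible over $\HM$ (constant terms one way, a geometric series in the $t$-adic topology the other). The largest divergence is (viii): the paper extends $\psi$ multiplicatively to rectangular matrices (Lemma~\ref{PsiLem}) and then simply quotes the complex Sylvester identity for $\det_t$, a one-line proof, while your factorizations of $\left[\begin{smallmatrix}{\bf I}_m&{\bf A}\\{\bf B}&{\bf I}_n\end{smallmatrix}\right]$ stay entirely inside square quaternionic matrices and deduce the identity from (iii), (iv) and (vii). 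Your route is longer but self-contained and never needs $\psi$ on non-square matrices; the paper's is shorter but leans on the rectangular version of Lemma~\ref{PsiLem}. (One cosmetic point: for the column operation in (iv) the elementary matrix is ${\bf I}_n+{\bf E}_{ij}c$ acting on the right, with the quaternionic scalar on the right of the matrix unit; it is still triangular with $1$'s on the diagonal, so your argument goes through unchanged.)
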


\begin{proof}  
(i)\ \ It follows from Lemma \ref{ImageOfPsit} that 
\[
{\det}_t({\bf N})={\det}_t({\bf J}^{-1}\overline{\bf N}{\bf J})
={\det}_t(\overline{\bf N})=\overline{{\det}_t({\bf N})},
\]
for any ${\bf N}{\;\in\;}\psi_t(\Mat(n,\HM[[t]]))$. 
Thus $\Sdet_t({\bf M})$ must belongs to $\RM[[t]]$. \\
(ii)\ \ It is known that a matrix over a commutative ring is invertible if and 
only if its determinant is invertible, and that 
a formal power series is invertible if and only if 
its constant term is invertible. 
Hence, for any ${\bf M}{\;\in\;}\Mat(n,\HM[[t]])$, 
the constant term of $\det_t{\cdot}\psi_t({\bf M})$ is equal to $0 {\ \Leftrightarrow\ }$
$\det_t{\cdot}\psi_t({\bf M})$ is not invertible ${\ \Leftrightarrow\ }$ 
$\psi_t({\bf M})$ is singular. Taking conjugate and inverse of 
$\overline{\psi_t({\bf M})}{\bf J}={\bf J}{\psi_t({\bf M})}$, we have: 
\[
{\bf J}\psi_t({\bf M})^{-1}=\overline{\psi_t({\bf M})^{-1}}{\bf J}.
\]
Hence by Lamma \ref{ImageOfPsit}, $\psi_t({\bf M})^{-1}{\;\in\;}\psi_t(\Mat(n,\HM[[t]]))$ 
and it follows that $\psi_t({\bf M})$ is singular $\ \Leftrightarrow\ {\bf M}$ is singular. \\
(iii)\ \ Since $\psi_t$ is an algebra homomorphism, (iii) immediately holds. \\
(iv)\ \ Let $b=b_1+jb_2{\;\in\;}\HM[[t]]$ ($b_1,b_2{\;\in\;}\CM[[t]]$). 
It suffice to show that for $k{\;\neq\;}\ell$, the $n{\times}n$ matrix 
${\bf B}_{k\ell}(b)={\bf I}_n+b{\bf E}_{k\ell}$ satisfies $\Sdet_t({\bf B}_{k\ell}(b))=1$. 
Since four blocks of 
\[
\psi_t({\bf B}_{k\ell}(b))=\begin{bmatrix}
{\bf I}_n+b_1{\bf E}_{k\ell} & -\overline{b_2}{\bf E}_{k\ell} \\
b_2{\bf E}_{k\ell} & {\bf I}_n+\overline{b_1}{\bf E}_{k\ell}
\end{bmatrix}
\]
commute with each other, we can apply Remark \ref{ExtDetPartitionedMat} and 
obtain: 
\[
{\det}_t{\cdot}\psi_t({\bf B}_{k\ell}(b))
={\det}_t(({\bf I}_n+b_1{\bf E}_{k\ell})({\bf I}_n+\overline{b_1}{\bf E}_{k\ell})
+b_2{\bf E}_{k\ell}\overline{b_2}{\bf E}_{k\ell})=1.
\]
\noindent
(v)\ \ We prove for row exchange. 
Putting ${\bf N}={\bf P}{\bf M}$ for the permutation matrix ${\bf P}$ 
corresponding to some transposition, it follows that: 
\begin{equation*}
\begin{split}
\Sdet_t({\bf N})&=\Sdet_t({\bf P}{\bf M})={\det}_t
\Big{(}\begin{bmatrix}{\bf P} & {\bf O} \\ {\bf O} & {\bf P}\end{bmatrix}
\psi_t({\bf M})\Big{)}\\
&={\det}_t
\Big{(}\begin{bmatrix}{\bf P} & {\bf O} \\ {\bf O} & {\bf P}\end{bmatrix}\Big{)}
{\det}_t\Big{(}\psi_t({\bf M})\Big{)}=\det({\bf P})^2\Sdet_t({\bf M})\\
&=\Sdet_t({\bf M}).
\end{split}
\end{equation*}
\noindent
(vi)\ \ Let ${\bf M} \in \Mat(n,\HM[[t]])$ and 
$\alpha = \alpha {}^S +j \alpha^P \in {\HM[[t]]} \ 
( \alpha^S , \alpha^P \in {\CM[[t]]})$ be the symplectic decomposition. 
Then by using Lemma \ref{PsiLem}, Remark \ref{ExtDetPartitionedMat} and 
Lemma \ref{ProdOfFAndConjF}, we have: 
\[
\begin{array}{rcl}
{\Sdet_t} ( \alpha {\bf M}) & = & \det_t ( \psi_t ( \alpha {\bf M}))
= \det_t ( \psi_t ( \alpha {\bf I}_n ) \psi_t ({\bf M})) \\
\  &   &                \\ 
\  & = & \det_t  
\left[ 
\begin{array}{cc}
\alpha^S {\bf I}_n & - \overline{ \alpha^P } {\bf I}_n \\ 
\alpha^P {\bf I}_n & \overline{ \alpha^S } {\bf I}_n  
\end{array} 
\right]
\det_t (\psi_t ({\bf M})) \\ 
\  &   &                \\ 
\  & = & 
\det_t (( \alpha^S {\bf I}_n )( \overline{\alpha^S} {\bf I}_n )
+( \alpha^P {\bf I}_n )( \overline{\alpha^P} {\bf I}_n )) 
{\Sdet_t} ({\bf M}) \\ 
\  &   &                \\ 
\  & = & 
\det_t ((\alpha^S\overline{\alpha^S} +\alpha^P\overline{\alpha^P}) {\bf I}_n ) {\Sdet_t} ({\bf M}) \\ 
\  &   &                \\ 
\  & = & (\alpha\alpha^*)^n {\Sdet_t} ({\bf M}) . 
\end{array}
\]
In the same way, we can deduce $\Sdet_t({\bf M}\alpha)=(\alpha\alpha^*)^n\Sdet_t({\bf M})$. 

\noindent
(vii)\ \ For a $2n{\times}2n$ matrix ${\bf N}$ and any two subsets 
$I=\{i_1,i_2,{\cdots},i_r\},\, J=\{j_1,j_2,{\cdots},j_s\}$ of $[2n]$, 
${\bf N}^{IJ}$ denotes the submatrix obtained from ${\bf N}$ by deleting 
$i_1,i_2,{\cdots},i_r$ th rows and $j_1,j_2,{\cdots},j_s$ th columns. 
Then by definitions of $\Sdet_t$ and $\psi_t$, we get the following: 
\begin{equation*}
\begin{split}
\Sdet_t({\bf M})&=
{\det}_t(\psi_t({\bf M}))
={\det}_t \begin{bmatrix}{\bf M}^S & -\overline{{\bf M}^P}\\
{\bf M}^P & \overline{{\bf M}^S}
\end{bmatrix}\\
&={\det}_t \begin{bmatrix}
\lambda_1^{S}& * & {\cdots} & * & -\overline{\lambda_1^{P}} & * & {\cdots} & * \\
0 &\lambda_2^{S}&  & * & 0 & -\overline{\lambda_2^{P}} & & * \\
{\vdots} &   & \ddots & {\vdots} & {\vdots} &   & \ddots & {\vdots} \\
0 & 0 & {\cdots} &\lambda_n^{S} & 0 & 0 & {\cdots} & -\overline{\lambda_n^{P}} \\
\lambda_1^{P} & * & {\cdots} & * & \overline{\lambda_1^{S}}& * & {\cdots} & * \\
0 & \lambda_2^{P} &  & * & 0 &\overline{\lambda_2^{S}}& & * \\
{\vdots} &   & \ddots & {\vdots} & {\vdots} &   & \ddots & {\vdots} \\
0 & 0 & {\cdots} & \lambda_n^{P} & 0 & 0 & {\cdots} &\overline{\lambda_n^{S}}
\end{bmatrix}\\
&=\lambda_1^{S}\psi_t({\bf M})^{\{1\}\{1\}}+(-1)^{n+2}\lambda_1^{P}\psi_t({\bf M})^{\{n+1\}\{1\}}\\
&=\lambda_1^{S}\overline{\lambda_1^{S}}\psi_t({\bf M})^{\{1,n+1\}\{1,n+1\}}
+\lambda_1^{P}\overline{\lambda_1^{P}}\psi_t({\bf M})^{\{1,n+1\}\{1,n+1\}}\\
&=(\lambda_1^{S}\overline{\lambda_1^{S}}+\lambda_1^{P}\overline{\lambda_1^{P}})
\psi_t({\bf M})^{\{1,n+1\}\{1,n+1\}}\\
&=\lambda_1\lambda_1^*\psi_t({\bf M})^{\{1,n+1\}\{1,n+1\}}\\
&=\lambda_1\lambda_1^*\lambda_2\lambda_2^*\psi_t({\bf M})^{\{1,2,n+1,n+2\}\{1,2,n+1,n+2\}}
={\cdots}\\
&=\prod_{i=1}^n\lambda_i\lambda_i^*
\end{split}
\end{equation*}

\noindent
(viii)\ \ Let ${\bf A}$ be an $m \times n$ matrix and ${\bf B}$ an $n \times m$ matrix.
Then, by the definition of the Study determinant, we have: 
\begin{equation*}
\begin{split}
{\Sdet_t}({\bf I}_m-{\bf A}{\bf B})&={\det}_t(\psi_t({\bf I}_m -{\bf A}{\bf B})) 
={\det}_t({\bf I}_{2m}-\psi_t({\bf A})\psi_t({\bf B})) \\
&= {\det}_t({\bf I}_{2n}-\psi_t({\bf B})\psi_t({\bf A})) 
= {\det}_t(\psi_t({\bf I}_n-{\bf B}{\bf A})) \\
&= {\Sdet_t}({\bf I}_n-{\bf B}{\bf A}). 
\end{split}
\end{equation*}
\end{proof}

\begin{rem}
$\Sdet_t$ is not multilinear as $\det_t$ is. Furthermore, 
$\Sdet_t({}^T\!{\bf M})=\Sdet_t({\bf M})$ does not hold in general where 
${}^T\!{\bf M}$ is the transpose of ${\bf M}$. 
\end{rem}

\section{The Ihara zeta function of a graph} 
Let $G=(V(G)$, $E(G))$ be a finite connected graph with the set $V(G)$ of 
vertices and the set $E(G)$ of unoriented edges $uv$ 
joining two vertices $u$ and $v$. 
We assume that $G$ has neither loops nor multiple edges throughout. 
For $uv \in E(G)$, an arc $(u,v)$ is the oriented edge from $u$ to $v$. 
Let $D(G)=\{\,(u,v),\,(v,u)\,\mid\,uv{\;\in\;}E(G)\}$ and 
$|V(G)|=n,\;|E(G)|=m,\;|D(G)|=2m$. 
For $e=(u,v){\;\in\;}D(G)$, $o(e)=u$ denotes the {\it origin} and $t(e)=v$ the {\it terminal} 
of $e$ respectively. 
Furthermore, let $e^{-1}=(v,u)$ be the {\em inverse} of $e=(u,v)$. 
The {\em degree} $\deg v = \deg {}_G \  v$ of a vertex $v$ of $G$ is the number of edges 
incident to $v$. 
For a natural number $k$, a graph $G$ is called {\em $k$-regular } if $\deg {}_G \  v=k$ 
for each vertex $v$ of $G$. 
A {\em path $P$ of length $\ell$} in $G$ is a sequence 
$P=(e_1, \cdots ,e_{\ell})$ of $\ell$ arcs such that $e_i \in D(G)$ and 
$t(e_i)=o(e_{i+1})$ for $i{\;\in\;}\{1,\cdots,\ell-1\}$. 
We set $o(P)=o(e_1)$ and $t(P)=t(e_{\ell})$. $|P|$ denotes the length of $P$. 
We say that a path $P=(e_1, \cdots ,e_{\ell})$ has a {\em backtracking} 
if $ e_{i+1} =e_i^{-1} $ for some $i(1{\leq}i{\leq}\ell-1)$.
A path $P$ is said to be a {\em cycle} if $t(P)=o(P)$. 
The {\em inverse} of a path 
$P=(e_1,\cdots,e_{\ell})$ is the path 
$(e_{\ell}^{-1},\cdots,e_1^{-1})$ and is denoted by $P^{-1}$.

An equivalence relation between cycles is given as follows. 
Two cycles $C_1 =(e_1, \cdots ,e_{\ell})$ and 
$C_2 =(f_1, \cdots ,f_{\ell})$ are said to be {\em equivalent} if there exists 
$k$ such that $f_j =e_{j+k}$ for all $j$ where indices are treated modulo $\ell$. 
Let $[C]$ be the equivalence class which contains the cycle $C$. 
Let $B^r$ be the cycle obtained by going $r$ times around a cycle $B$. 
Such a cycle is called a {\em power} of $B$. 
A cycle $C$ is said to be {\em reduced} if 
both $C$ and $C^2$ have no backtracking. 
Furthermore, a cycle $C$ is said to be {\em prime} if it is not a power of 
a strictly smaller cycle. 
Note that each equivalence class of prime, reduced cycles of a graph $G$ 
corresponds to a unique conjugacy class of 
the fundamental group $ \pi {}_1 (G,v)$ of $G$ at a vertex $v$ of $G$.

The {\em Ihara zeta function} of a graph $G$ is 
a function of $t \in {\bf C}$ with $|t|$ sufficiently small, 
defined by 
\[
{\bf Z} (G, t)= {\bf Z}_G (t)= \prod_{[C]} (1- t^{|C|} )^{-1} ,
\]
where $[C]$ runs over all equivalence classes of prime, reduced cycles 
of $G$. 

Let ${\bf B}=( {\bf B}_{ef} )_{e,f \in D(G)} $ and 
${\bf J}_0=( {\bf J}_{ef} )_{e,f \in D(G)}$ be $2m \times 2m$ matrices 
defined as follows: 
\[
{\bf B}_{ef} =\left\{
\begin{array}{ll}
1 & \mbox{if $t(e)=o(f)$, } \\
0 & \mbox{otherwise,}
\end{array}
\right.
{\bf J}_{ef} =\left\{
\begin{array}{ll}
1 & \mbox{if $f= e^{-1} $, } \\
0 & \mbox{otherwise.}
\end{array}
\right.
\]
Then the matrix ${\bf B} - {\bf J}_0 $ is called the {\em edge matrix} of $G$.

\begin{thm}[Hashimoto\cite{Hashimoto1989}; Bass\cite{Bass1992}]
Let $G$ be a finite connected graph. 
Then the reciprocal of the Ihara zeta function of $G$ is given by 
\[
{\bf Z} (G, t)^{-1} =\det ( {\bf I} -t ( {\bf B} - {\bf J}_0 ))
=(1- t^2 )^{r-1} \det ( {\bf I} -t {\bf A} + 
t^2 ({\bf D} -{\bf I} )), 
\]
where $r$ and ${\bf A}$ are the Betti number and the adjacency matrix 
of $G$, respectively, and ${\bf D} =( d_{ij} )$ is the diagonal matrix 
with $d_{ii} = \deg v_i $, $V(G)= \{ v_1 , \cdots , v_n \} $. 
${\bf Z} (G, t)$ has an expression as the exponential of a generating 
function as follows: 
\[
{\bf Z} (G, t)=\exp\Big{(}\sum_{k{\geq}1}\dfrac{N_k}{k}t^k\Big{)}, 
\]
where $N_k$ is the number of equivalence classes of reduced cycles of length $k$. 
\end{thm}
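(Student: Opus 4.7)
The plan is to prove the theorem in three stages: first the exponential generating function expression, then the Hashimoto-type edge matrix determinant formula, and finally the Bass-type reduction to an $n \times n$ vertex matrix identity. The three pieces are essentially independent, with the hardest step being the last.

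For the exponential formula, I would take the logarithm of the Euler product ${\bf Z}(G,t) = \prod_{[C]}(1-t^{|C|})^{-1}$ and expand $-\log(1-t^{|C|}) = \sum_{r \geq 1} t^{r|C|}/r$. Each summand corresponds to the $r$-th power $C^r$ of a prime reduced cycle. Using that the cyclic-rotation orbit of $C^r$ has cardinality $|C|$ when $C$ is prime, regrouping by total length converts the double sum into $\sum_{k \geq 1} N_k t^k / k$, where $N_k$ records the appropriate count of reduced closed walks of length $k$. For Hashimoto's formula, the key observation is that $({\bf B} - {\bf J}_0)_{ef} = 1$ precisely when $t(e) = o(f)$ and $f \neq e^{-1}$, that is, when $f$ is an admissible successor of $e$ along a reduced walk. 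Hence $\operatorname{tr}(({\bf B} - {\bf J}_0)^k)$ counts closed reduced walks of length $k$ with marked starting arc, which equals $N_k$, and combining with the standard formal-power-series identity $\log \det({\bf I} - tM)^{-1} = \sum_{k \geq 1} \operatorname{tr}(M^k) t^k / k$ gives ${\bf Z}(G,t)^{-1} = \det({\bf I} - t({\bf B} - {\bf J}_0))$.

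For Bass's formula, I would introduce the $n \times 2m$ origin and terminal incidence matrices ${\bf S}, {\bf T}$ defined by ${\bf S}_{ve} = \delta_{v, o(e)}$ and ${\bf T}_{ve} = \delta_{v, t(e)}$, and verify the basic relations ${\bf T} = {\bf S}{\bf J}_0$, ${\bf B} = {\bf T}^T {\bf S}$, ${\bf S}{\bf T}^T = {\bf A}$, ${\bf S}{\bf J}_0 {\bf T}^T = {\bf T}{\bf T}^T = {\bf D}$, and ${\bf J}_0^2 = {\bf I}_{2m}$. The key step is to apply both Schur complements to the $(n + 2m) \times (n + 2m)$ block matrix
\[
{\bf N} = \begin{pmatrix} {\bf I}_n & t{\bf S} \\ {\bf T}^T & {\bf I}_{2m} + t{\bf J}_0 \end{pmatrix}.
\]
Eliminating the upper-left $n \times n$ block yields $\det {\bf N} = \det({\bf I}_{2m} + t{\bf J}_0 - t{\bf T}^T{\bf S}) = \det({\bf I}_{2m} - t({\bf B} - {\bf J}_0))$. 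Eliminating the lower-right block, and using $\det({\bf I}_{2m} + t{\bf J}_0) = (1-t^2)^m$ together with $({\bf I}_{2m} + t{\bf J}_0)^{-1} = (1-t^2)^{-1}({\bf I}_{2m} - t{\bf J}_0)$, then contracting via the relations above, gives $\det {\bf N} = (1-t^2)^{m-n} \det({\bf I}_n - t{\bf A} + t^2({\bf D} - {\bf I}))$. Equating the two expressions and invoking $r - 1 = m - n$ for a connected graph yields the desired identity.

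The main obstacle is the guess of the correct block matrix ${\bf N}$ and the verification that the lower-right Schur complement collapses neatly to a scalar multiple of $\det({\bf I}_n - t{\bf A} + t^2({\bf D} - {\bf I}))$. The $(1-t^2)^{m-n}$ prefactor emerges as the difference between the ambient $\det({\bf I}_{2m} + t{\bf J}_0) = (1-t^2)^m$ arising from eliminating the $2m$-dimensional block and the factor $(1-t^2)^{-n}$ absorbed when pulling $(1-t^2)$ out of each of the $n$ rows of the reduced matrix; getting these powers to match up cleanly is the delicate bookkeeping that makes Bass's identity nontrivial.
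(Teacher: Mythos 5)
Your proposal is correct, and all three stages would go through: the regrouping of the Euler product by rooted (base-pointed) reduced cycles, the identification of $\operatorname{tr}(({\bf B}-{\bf J}_0)^k)$ with the non-backtracking closed walk count, and the two Schur complements of the block matrix ${\bf N}$ together with the relations ${\bf T}^T{\bf S}={\bf B}$, ${\bf S}{\bf T}^T={\bf A}$, ${\bf S}{\bf J}_0{\bf T}^T={\bf D}$ all check out, as does the bookkeeping $(1-t^2)^m(1-t^2)^{-n}=(1-t^2)^{r-1}$. Note, however, that the paper offers no proof of this theorem at all --- it is quoted as background with citations to Hashimoto and Bass --- so the relevant comparison is with the paper's proofs of its quaternionic generalizations in Sections 6 and 7. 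There the route to the edge-matrix (Hashimoto-type) formula is genuinely different: instead of the trace identity $\log\det({\bf I}-t{\bf M})^{-1}=\sum_k \operatorname{tr}({\bf M}^k)t^k/k$, which is unavailable over the quaternions, the paper factors ${\bf I}-{\bf A}t$ as an ordered product over Lyndon words (Proposition \ref{AmitsurForMatrixThm}, in the spirit of Foata--Zeilberger) and matches the factors directly with the Euler product; your trace argument is shorter in the commutative setting but does not survive the noncommutative extension. For the Bass-type reduction the two arguments are essentially the same idea in different clothing: the paper writes ${\bf I}-t({\bf B}_w-{\bf J}_w)=({\bf I}-t{\bf K}\,{}^T\!{\bf L}({\bf I}+t{\bf J}_w)^{-1})({\bf I}+t{\bf J}_w)$ and swaps ${\bf K}$ past ${}^T\!{\bf L}({\bf I}+t{\bf J}_w)^{-1}$ using $\Sdet_t({\bf I}-{\bf A}{\bf B})=\Sdet_t({\bf I}-{\bf B}{\bf A})$, which is exactly the content of your two Schur complements with ${\bf K},{\bf L}$ playing the roles of your incidence matrices. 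One small point in your favor: your $N_k$ is correctly the number of \emph{rooted} reduced closed walks (equal to $\operatorname{tr}(({\bf B}-{\bf J}_0)^k)$), which is what the exponential formula actually requires, rather than the number of equivalence classes as the theorem's wording loosely suggests.
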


The matrix ${\bf D}$ is called the {\em degree matrix} of $G$. 

A weighted zeta function of a graph was first defined in Hashimoto \cite{Hashimoto1989} 
by giving a weight for each edge of a graph. 
After that, it was generalized in 
Stark and Terras \cite{ST1996} by giving a weight for each arc of a graph as follows. 
Let $D(G)= \{ e_1, \cdots, e_m, e_{m+1} $, $ \cdots , e_{2m} \}$ where 
$e_{m+i} = e^{-1}_i$ for $1 \leq i \leq m$. For each arc $e_i$ ($i=1,\cdots,2m$), 
associate a complex number $u_i$, and set ${\bf u} = (u_1 , \cdots , u_{2m} )$. 
Let $g(C)=  u_{i_1 } \cdots u_{i_k} $ for a cycle $C=( e_{i_1 }, \cdots, e_{i_k} )$. 
Then the {\em edge zeta function} $\zeta {}_G ({\bf u} )$ of $G$ is defined by 
\[
\zeta {}_G ( {\bf u} )= \prod_{[C]} (1-g(C) )^{-1} , 
\]
where $[C]$ runs over all equivalent classes of prime, reduced cycles of $G$.

\begin{thm}[Stark and Terras\cite{ST1996}]
Let $G$ be a connected graph with $m$ edges. Then  
\[
\zeta {}_G ( {\bf u} ) {}^{-1} = 
\det ( {\bf I}_{2m} - ( {\bf B} - {\bf J} {}_0) {\bf U} ) 
= \det ( {\bf I}_{2m} - {\bf U} ( {\bf B} - {\bf J} {}_0)) ,  
\]
where ${\bf U} ={\rm diag} (u_1 , \cdots , u_{2m} )$ is the diagonal matrix. 
\end{thm}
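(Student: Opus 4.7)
The plan is to mimic the Hashimoto/Bass proof of the determinant formula for the ordinary Ihara zeta function, working in the ring of formal power series in the indeterminates $u_1,\ldots,u_{2m}$ (this avoids convergence issues and the resulting identity then specializes to complex $\mathbf{u}$ with small norm). The route is: express $\log\zeta_G(\mathbf{u})$ as a generating function $\sum_k N_k/k$, identify $N_k$ with $\mathrm{tr}(\mathbf{M}^k)$ for the matrix $\mathbf{M}=(\mathbf{B}-\mathbf{J}_0)\mathbf{U}$, and then apply the standard $\mathrm{tr}\log = \log\det$ identity to conclude $\zeta_G(\mathbf{u})^{-1}=\det(\mathbf{I}_{2m}-\mathbf{M})$.

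For the first step, define $N_k$ to be the sum of $g(C)$ over \emph{labeled} reduced closed paths $C$ of length $k$ (i.e., sequences of arcs, not equivalence classes). Each such labeled path is a cyclic rotation of some power $B^r$ of a unique prime reduced cycle $[B]$ with $|B|\mid k$ and $r=k/|B|$. Since $B$ is prime, its $|B|$ cyclic rotations are pairwise distinct, and the same is true of the $|B|$ rotations of $B^r$ that yield distinct labeled paths. Hence $N_k=\sum_{[B]\text{ prime},\,|B|\mid k}|B|\,g(B)^{k/|B|}$. Substituting this into $\sum_{k\geq 1}N_k/k$, regrouping with $n=|B|$ and $k=rn$, and using $-\log(1-g(B))=\sum_{r\geq1}g(B)^r/r$, we obtain
\[
\sum_{k\geq1}\frac{N_k}{k}=\sum_{[B]\text{ prime}}\sum_{r\geq1}\frac{g(B)^r}{r}=-\sum_{[B]\text{ prime}}\log(1-g(B))=\log\zeta_G(\mathbf{u}).
\]

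For the second step, set $\mathbf{M}=(\mathbf{B}-\mathbf{J}_0)\mathbf{U}$; its $(e,f)$-entry is $u_f$ if $t(e)=o(f)$ and $f\ne e^{-1}$, and $0$ otherwise. Expanding $\mathrm{tr}(\mathbf{M}^k)=\sum_{e_1,\ldots,e_k}\mathbf{M}_{e_1e_2}\mathbf{M}_{e_2e_3}\cdots\mathbf{M}_{e_ke_1}$, a term is nonzero precisely when $(e_1,\ldots,e_k)$ is a reduced closed path, and its value is $u_{e_2}\cdots u_{e_k}u_{e_1}=g(C)$. Thus $\mathrm{tr}(\mathbf{M}^k)=N_k$. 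Combining with Step~1 and the formal-power-series identity $\log\det(\mathbf{I}-\mathbf{M})=-\sum_{k\geq1}\mathrm{tr}(\mathbf{M}^k)/k$ (which is valid whenever $\mathbf{M}$ has entries in a maximal ideal of a commutative coefficient ring, and follows from the logarithm/exponential formalism developed in Section~3) yields $\zeta_G(\mathbf{u})^{-1}=\det(\mathbf{I}_{2m}-(\mathbf{B}-\mathbf{J}_0)\mathbf{U})$. The second equality in the theorem is Sylvester's determinant identity $\det(\mathbf{I}-\mathbf{A}\mathbf{B})=\det(\mathbf{I}-\mathbf{B}\mathbf{A})$ applied with $\mathbf{A}=\mathbf{B}-\mathbf{J}_0$ and $\mathbf{B}=\mathbf{U}$.

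The main obstacle I expect is the careful bookkeeping in Step~1: one must verify that the $|B|$ cyclic rotations of a prime cycle $B$ are genuinely distinct as labeled sequences (equivalently, that a prime cycle has trivial period), and that every labeled reduced closed path of length $k$ is uniquely of the form ``some rotation of $B^r$'' for a unique prime class $[B]$ with $|B|\mid k$. This is the combinatorial heart of the argument; once it is in place, the remaining passage from the generating function to the determinant is the routine $\log\det$-identity used throughout the Ihara theory.
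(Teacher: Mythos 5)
Your argument is correct, and it is essentially the route the paper attributes to the original source: the paper states this theorem without proof, citing Stark and Terras, and explicitly remarks that in \cite{ST1996} ``the multivariable analogue $N_k({\bf u})$ of $N_k$ was defined and used to obtain the determinant expression'' --- which is exactly your Step 1 plus the $\operatorname{tr}\log=\log\det$ identity. The bookkeeping you flag (a prime cycle has exactly $|B|$ distinct cyclic rotations, and every reduced closed path of length $k$ arises uniquely as a rotation of $B^{k/|B|}$ for a unique prime class $[B]$) is indeed the only nontrivial combinatorial input, and your identification $N_k=\operatorname{tr}(\mathbf{M}^k)$ with $\mathbf{M}=(\mathbf{B}-\mathbf{J}_0)\mathbf{U}$ is right, since $\mathbf{M}_{e_ke_1}\neq 0$ enforces the cyclic no-backtracking condition $e_1\neq e_k^{-1}$ that the paper builds into its definition of ``reduced'' via the requirement on $C^2$. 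It is worth noting, however, that this is \emph{not} the mechanism the paper itself develops for proving statements of this type: for its own quaternionic analogue (Theorem \ref{DetExpressionByEdgeMatrix}) it uses the Lyndon-word factorization of Proposition \ref{AmitsurForMatrixThm} (the Foata--Zeilberger/Amitsur approach), which produces the Euler product directly as a factorization of $\mathbf{I}-\mathbf{A}t$ at the matrix level and never passes through traces or $\log\det$. That machinery is what survives when the weights no longer commute (the $\operatorname{tr}\log=\log\det$ identity is unavailable over $\mathbb{H}$), whereas your trace argument is the simpler and entirely adequate choice in the present commutative setting. The one step you wave at --- that $\log\det(\mathbf{I}-\mathbf{M})=-\sum_{k\geq1}\operatorname{tr}(\mathbf{M}^k)/k$ ``follows from the formalism of Section 3'' --- is standard but is not literally contained in that section (which treats a single central variable $t$ and noncommutative letters, not $\det$); you should either cite it as the formal Jacobi identity over $\mathbb{C}[[u_1,\dots,u_{2m}]]$ or prove it separately.
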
 

In \cite{ST1996}, the multivariable analogue $N_k({\bf u})$ of $N_k$ was defined and 
used to obtain the determinant expression. 
Mizuno and Sato \cite{MS2004} defined the weighted zeta function of a graph $G$, and 
gave a determinant expression by the weighted matrix of $G$. 
Later, they called this zeta function the first weighted zeta function in order to 
distinguish it from another zeta function which Sato defined after. 
Assigning a complex number $w(e)$ to each arc $e{\;\in\;}D(G)$, 
we define 
${\bf W} =(w_{uv})_{u,v \in V(G)}$ to be the $n \times n$ matrix as follows: 
\[
w_{uv} =\left\{
\begin{array}{ll}
w(e) & \mbox{if $e=(u,v) \in D(G)$, } \\
0 & \mbox{otherwise, }
\end{array}
\right.
\]
${\bf W}$ is called the {\em weighted matrix} of $G$. 
For each path $P=(e_{i_1},e_{i_2},{\cdots},e_{i_d})$, 
the {\it norm} $w(P)$ of $P$ is defined by 
$w(P)=w(e_{i_1})w(e_{i_1}){\cdots}w(e_{i_d})$. 
The {\em {\rm (}first{\rm )} weighted zeta function} of $G$ is defined by 
\begin{equation}\label{DefFirstZeta}
{\bf Z} (G,w,t)= \prod_{[C]} (1-w(C) t^{ | C | } ) {}^{-1} , 
\end{equation}
where $[C]$ runs over all equivalent classes of prime, reduced cycles of $G$.

\begin{thm}[Mizuno and Sato\cite{MS2004}] 
Let $G$ be a connected graph with $n$ vertices.  
Suppose that $w(e^{-1} )= w (e )^{-1} $ for $e \in D(G)$. Then 
\[
{\bf Z} (G,w,t)^{-1}  
=(1- t^2 )^{m-n} \det ( {\bf I}_n -t {\bf W} + t^2 ( {\bf D} - {\bf I}_n )) . 
\]
${\bf Z} (G,w,t)$ has an expression as the exponential of a generating 
function as follows: 
\[
{\bf Z} (G,w,t)=\exp\Big{(}\sum_{[C]}\sum_{s{\geq}1}
\dfrac{1}{s}w(C)^st^{|C|s}\Big{)}, 
\] 
where $[C]$ runs over all equivalent classes of prime, reduced cycles of $G$. 
\end{thm}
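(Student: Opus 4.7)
The plan is to establish the two claims independently.

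For the generating-function expression, I would take the formal logarithm of the Euler product (\ref{DefFirstZeta}). Since the factors $(1-w(C)t^{|C|})^{-1}$ lie in the commutative ring $\CM[[t]]$, Proposition \ref{ProLog} extends to the infinite product by $t$-adic continuity (any fixed power of $t$ is affected by only finitely many factors, since there are finitely many prime reduced cycles of a given length). This yields
\[
\log {\bf Z}(G,w,t) = -\sum_{[C]}\log(1-w(C)t^{|C|}) = \sum_{[C]}\sum_{s\geq 1}\frac{1}{s}w(C)^s t^{|C|s},
\]
after expanding each logarithm using the standard series $-\log(1-x)=\sum_{s\geq 1}x^s/s$ and Corollary \ref{CorLog}. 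Applying Theorem \ref{ThmExpAndLog}(1) produces the exponential formula.

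For the determinant expression, I would proceed in two stages. First, derive a Hashimoto-type edge-matrix identity
\[
{\bf Z}(G,w,t)^{-1} = \det({\bf I}_{2m}-t({\bf B}-{\bf J}_0){\bf W}'),
\]
where ${\bf W}'=\mathrm{diag}(w(e_1),\dots,w(e_{2m}))$. This follows by applying Proposition \ref{AmitsurForMatrixThm} to ${\bf A}=({\bf B}-{\bf J}_0){\bf W}'$ indexed by $D(G)$: the $(e,f)$-entry is $w(f)$ precisely when $t(e)=o(f)$ and $f\neq e^{-1}$, and is zero otherwise. The nonvanishing Lyndon-word products in the factorization (\ref{EqnAmitsurIdentity}) correspond bijectively to equivalence classes of prime reduced cycles $C$, each contributing the scalar weight $w(C)t^{|C|}$. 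Taking $\det$ of both sides of (\ref{EqnAmitsurIdentity}) converts the matrix product into the scalar Euler product and collapses it to the claimed edge identity.

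The second stage is a Bass-type conversion to the vertex-matrix form. Introduce the $n\times 2m$ incidence matrices ${\bf S}$, ${\bf T}$ with $S_{v,e}=\delta_{o(e),v}$ and $T_{v,e}=\delta_{t(e),v}$; then ${\bf B}={}^T\!{\bf T}\,{\bf S}$, ${\bf S}{\bf J}_0={\bf T}$, and ${\bf S}{\bf W}'\,{}^T\!{\bf T}={\bf W}$, while the hypothesis $w(e^{-1})=w(e)^{-1}$ translates into ${\bf J}_0{\bf W}'{\bf J}_0=({\bf W}')^{-1}$. The idea is to embed $({\bf I}_{2m}-t({\bf B}-{\bf J}_0){\bf W}')$ inside a suitable $(n+2m)\times(n+2m)$ block matrix and compute its determinant in two ways via Schur complementation: one order recovers the edge determinant (up to an $(1-t^2)^{-(m-n)}$ factor extracted from the lower-right block), and the other produces $\det({\bf I}_n-t{\bf W}+t^2({\bf D}-{\bf I}_n))$ via the upper-left Schur complement, with the arc-weight identity ${\bf J}_0{\bf W}'{\bf J}_0=({\bf W}')^{-1}$ collapsing the cross-terms precisely into the $t^2({\bf D}-{\bf I}_n)$ correction. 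The main technical obstacle is this last step: designing the block matrix so that the weight-inversion hypothesis activates at the correct place, tracking both the prefactor $(1-t^2)^{m-n}$ arising from the $2m-n$ dimensional ``backtracking-like'' subspace and the vertex-level determinant simultaneously. Once the correct edge matrix is identified, the log/determinant machinery developed in Sections 3--4 makes the first stage entirely mechanical.
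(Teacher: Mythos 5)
The paper itself offers no proof of this theorem: it is quoted from Mizuno and Sato \cite{MS2004} as background in Section 5. The fair comparison is therefore with the paper's proof of the quaternionic analogue in Section 7, which, run with $\det$ in place of $\Sdet_t$ and with commuting complex weights satisfying $w(e)w(e^{-1})=1$, is precisely a proof of this statement. Your generating-function argument and the first stage of your determinant argument (the Hashimoto-type identity ${\bf Z}(G,w,t)^{-1}=\det({\bf I}_{2m}-t({\bf B}-{\bf J}_0){\bf W}')$ via Proposition \ref{AmitsurForMatrixThm} and the bijection between Lyndon words and equivalence classes of prime reduced cycles) are correct and follow the same lines as Theorems \ref{ThmExpGenFunction} and \ref{DetExpressionByEdgeMatrix}.

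The gap is the Bass-type conversion, which you defer rather than carry out: you introduce the right auxiliary identities (${\bf B}={}^T{\bf T}{\bf S}$, ${\bf J}_0{\bf W}'{\bf J}_0=({\bf W}')^{-1}$) but then declare the design of the $(n+2m)\times(n+2m)$ block matrix and the extraction of the factor $(1-t^2)^{m-n}$ to be ``the main technical obstacle'' without resolving it. As written this is a plan, not a proof, and it is exactly the step where the hypothesis $w(e^{-1})=w(e)^{-1}$ must do its work. The obstacle disappears if you follow the route of Theorem \ref{DetExpressionOfBassType} instead: write ${\bf I}_{2m}-t({\bf B}_w-{\bf J}_w)=({\bf I}_{2m}-t{\bf B}_w({\bf I}_{2m}+t{\bf J}_w)^{-1})({\bf I}_{2m}+t{\bf J}_w)$; the second factor is block diagonal in $2\times 2$ blocks indexed by unoriented edges, each of determinant $1-w(e)w(e^{-1})t^2=1-t^2$, giving $(1-t^2)^m$; the rank factorization ${\bf B}_w={\bf K}\,{}^T\!{\bf L}$ together with $\det({\bf I}_{2m}-{\bf A}{\bf B})=\det({\bf I}_n-{\bf B}{\bf A})$ reduces the first factor to $\det({\bf I}_n-t\,{}^T\!{\bf L}({\bf I}_{2m}+t{\bf J}_w)^{-1}{\bf K})$, and the entrywise computation in the paper gives ${}^T\!{\bf L}({\bf I}_{2m}+t{\bf J}_w)^{-1}{\bf K}=(1-t^2)^{-1}({\bf W}-t{\bf D})$ under the stated hypothesis. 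Pulling $(1-t^2)^{-1}$ out of the $n$ rows yields $(1-t^2)^{-n}\det({\bf I}_n-t{\bf W}+t^2({\bf D}-{\bf I}_n))$, and combining with $(1-t^2)^m$ gives the claimed formula. This is the same Sylvester-identity mechanism as your two-way Schur complement, but it requires no ad hoc design of a bordered matrix; you should either adopt it or actually exhibit your block matrix and verify both complements.
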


Later, Watanabe and Fukumizu \cite{WF2009} gave a determinant expression for 
the edge zeta function of a graph with no additional condition $w(e^{-1} )= w (e )^{-1}$. 
Rearranging arcs so that $D(G)=\{e_1, e_1^{-1},\cdots,e_m, e_m^{-1}\}$, and putting 
$u_i=w(e_i)$, ${\bf U}$ turns out to be  
\begin{equation*}
{\bf U}=\begin{bmatrix}
w(e_1) & & & & 0 \\
 & w(e_{1}^{-1}) & & & \\
 & & \ddots & & \\
 & & & w(e_m) & \\
0& & & & w(e_{m}^{-1})
\end{bmatrix}.
\end{equation*} 
In \cite{WF2009}, two $n \times n$ matrices 
$\hat{\bf A}=(\hat{\bf A}_{uv})_{u,v{\in}V(G)}$ and 
$\hat{\bf D}=(\hat{\bf D}_{uv})_{u,v{\in}V(G)}$ were defined as follows: 
\begin{equation*}
\begin{split}
\hat{\bf A}_{uv} &= \begin{cases}
\dfrac{w(u,v)}{1-w(u,v)w(v,u)} & \mbox{if $(u,v){\;\in\;}D(G)$, } \\
0 & \mbox{otherwise,}
\end{cases}\\
\hat{\bf D}_{uv} &= \delta_{uv}
\sum_{\substack{e{\in}D(G)\\o(e)=u}}\dfrac{w(e)w(e^{-1})}{1-w(e)w(e^{-1})}.
\end{split}
\end{equation*}
where $\delta_{uv}$ denotes the Kronecker's delta. 
They derived their determinant expression with $\hat{\bf A}$ and $\hat{\bf D}$ 
instead of the weighted matrix and the degree matrix of $G$. 
\begin{thm}[Watanabe and Fukumizu\cite{WF2009}] 
\[
\zeta {}_G ( {\bf u} ) {}^{-1}
= \det ( {\bf I}_{2m} - {\bf U} ( {\bf B} - {\bf J} {}_0)) 
= \det({\bf I}_{n}+\hat{\bf D}-\hat{\bf A})\prod_{i=1}^m(1-w(e_i)w(e_i^{-1}))
\]
\end{thm}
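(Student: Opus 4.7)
The plan is to prove only the second equality, since the first is the Stark--Terras theorem cited just above. The strategy is the classical Bass-type reduction from a $2m\times 2m$ edge determinant to an $n\times n$ vertex determinant, adapted to the arc-weighted setting: I would separate out the ``backtracking'' contribution from $\mathbf{J}_0$ algebraically, then apply the Sylvester $\det(\mathbf{I}-\mathbf{XY})=\det(\mathbf{I}-\mathbf{YX})$ identity (which is just part (viii) of Proposition \ref{SdettProperties} for ordinary determinants) to the remaining factor.

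First, I would exploit the arc ordering $D(G)=\{e_1,e_1^{-1},\ldots,e_m,e_m^{-1}\}$ in which $\mathbf{J}_0$ and $\mathbf{U}$ are block diagonal with $2\times 2$ blocks. In this ordering $\mathbf{U}\mathbf{J}_0$ has blocks $\left(\begin{smallmatrix}0&w(e_i)\\w(e_i^{-1})&0\end{smallmatrix}\right)$, so $\mathbf{I}_{2m}+\mathbf{U}\mathbf{J}_0$ has blocks $\left(\begin{smallmatrix}1&w(e_i)\\w(e_i^{-1})&1\end{smallmatrix}\right)$ and
\[
\det(\mathbf{I}_{2m}+\mathbf{U}\mathbf{J}_0)=\prod_{i=1}^m\bigl(1-w(e_i)w(e_i^{-1})\bigr).
\]
Assuming every factor is nonzero, I would then verify the algebraic identity
\[
\mathbf{I}_{2m}-\mathbf{U}(\mathbf{B}-\mathbf{J}_0)=(\mathbf{I}_{2m}+\mathbf{U}\mathbf{J}_0)\bigl(\mathbf{I}_{2m}-\mathbf{M}\mathbf{B}\bigr),\qquad \mathbf{M}:=(\mathbf{I}_{2m}+\mathbf{U}\mathbf{J}_0)^{-1}\mathbf{U},
\]
which is a direct expansion. (The degenerate case where some $1-w(e_i)w(e_i^{-1})=0$ is handled by a continuity/perturbation argument, or by noting that both sides are polynomials in the weights.)

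Second, I would write $\mathbf{B}=\mathbf{T}\mathbf{S}^{T}$ where $\mathbf{S},\mathbf{T}$ are the $2m\times n$ start/terminal incidence matrices: $S_{ev}=\delta_{o(e),v}$ and $T_{ev}=\delta_{t(e),v}$. Applying the Sylvester identity,
\[
\det\bigl(\mathbf{I}_{2m}-\mathbf{M}\mathbf{T}\mathbf{S}^{T}\bigr)=\det\bigl(\mathbf{I}_n-\mathbf{S}^{T}\mathbf{M}\mathbf{T}\bigr),
\]
so it remains to identify $\mathbf{S}^{T}\mathbf{M}\mathbf{T}$ with $\hat{\mathbf{A}}-\hat{\mathbf{D}}$. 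The main computational step is thus to evaluate the entries: the $2\times 2$ block of $\mathbf{M}$ attached to the pair $(e_i,e_i^{-1})$ works out to $\frac{1}{1-w(e_i)w(e_i^{-1})}\left(\begin{smallmatrix}w(e_i)&-w(e_i)w(e_i^{-1})\\-w(e_i)w(e_i^{-1})&w(e_i^{-1})\end{smallmatrix}\right)$, and summing contributions according to whether $o(e)=u$ and $t(f)=v$ are produced by $e=f$ or by $f=e^{-1}$ yields exactly
\[
(\mathbf{S}^{T}\mathbf{M}\mathbf{T})_{uv}=\hat{A}_{uv}-\delta_{uv}\hat{D}_{uu},
\]
using the loop-free hypothesis to discard the diagonal of $\hat{\mathbf{A}}$.

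Putting the pieces together via multiplicativity of the determinant gives the claimed factorization. I expect the only nontrivial piece of work to be the entrywise identification in the last step, where one must carefully account for the two ways an arc pair can contribute to a fixed pair of endpoints $(u,v)$; the rest is essentially formal manipulation of block-diagonal matrices and a single application of Sylvester's identity. The algebraic factorization of $\mathbf{I}_{2m}-\mathbf{U}(\mathbf{B}-\mathbf{J}_0)$ is the conceptual heart of the argument, as it isolates the Euler-type factor $\prod_i(1-w(e_i)w(e_i^{-1}))$ and reduces the remaining computation to the classical Bass collapse.
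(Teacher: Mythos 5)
Your argument is correct, and although the paper states this theorem as a cited result of Watanabe and Fukumizu without reproving it, your strategy --- factoring out the block-diagonal $\mathbf{I}_{2m}+\mathbf{U}\mathbf{J}_0$, writing $\mathbf{B}$ as a product of $2m\times n$ incidence matrices, applying the Sylvester identity $\det(\mathbf{I}-\mathbf{XY})=\det(\mathbf{I}-\mathbf{YX})$, and identifying the entries of the collapsed $n\times n$ matrix with $\hat{\mathbf{A}}-\hat{\mathbf{D}}$ --- is precisely the method the paper itself uses to prove its quaternionic analogue, Theorem \ref{DetExpressionOfBassType} (there with $\mathbf{K}\,{}^T\!\mathbf{L}=\mathbf{B}_w$, the inverse blocks $\mathbf{X}(e)^{-1}$, and Proposition \ref{SdettProperties} (viii) playing the roles of your $\mathbf{T}\mathbf{S}^T$, $\mathbf{M}$, and Sylvester step). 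So this is essentially the same approach, correctly executed.
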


\section{The quaternionic weighted zeta function of a graph} 
We continue with the notations in former sections. 
We shall define the quaternionic weighted zeta function of a graph. 
First we extend $w(e)$ to a quaternion. 
Let $w$ be a map from $V(G){\times}V(G)$ to $\HM$ such that 
$w(u,v)=0$ if $(u,v){\;\notin\;}D(G)$. We may write $w(e)$ instead of $w(u,v)$ 
if $e=(u,v){\;\in\;}D(G)$. We call $w(e)$ the {\it quaternionic weight} of $e{\;\in\;}D(G)$. 
For a path $P=( e_1 , \cdots , e_{\ell} )$ of $G$, the {\em norm} (or the {\em quaternionic 
weight}) $w(P)$ of $P$ is defined by $w(P)=w(e_1)w(e_2){\cdots}w(e_{\ell})$. 
We arrange the arcs $e_1,e_2,{\cdots},e_{2m}$ such that $e_{m+k}=e_{k}^{-1}$ for 
$k=1,{\cdots},m$. 
Now, we define the {\it quaternionic weighted zeta function} of $G$ to be the element 
of $\HM[[t]]$ as follows: 
\begin{equation}\label{DefQuatFirstZeta}
{\bf Z}_{\HM}(G,w,t)=\prod_{C}\Big{\{}(1-w(C)t^{|C|})(1-w(C)t^{|C|})^*\Big{\}}^{-1},
\end{equation}
where $C$ runs over all reduced cycles $C=e_{i_1}e_{i_2}{\cdots}e_{i_r}$ such that 
$i_1i_2{\cdots}i_r{\in}L_{[2m]}$. 
If $w(e)=1$ for any $e{\;\in\;}D(G)$, then the quaternionic weighted zeta function of 
$G$ is reduced to the square of the Ihara zeta function of $G$. 

\begin{rem}\label{RemQuatZeta}
\begin{enumerate}
\renewcommand{\labelenumi}{\rm (\roman{enumi})}
\item
The primeness is not required explicitly since 
$i_1i_2{\cdots}i_r{\in}L_{[2m]}$ implies the primeness of $e_{i_1}e_{i_2}{\cdots}e_{i_r}$. 
\item
{\rm (\ref{DefQuatFirstZeta})} does not depend on the order in which inverses of 
$(1-w(C)t^{|C|})(1-w(C)t^{|C|})^*$ are multiplied since 
$(1-w(C)t^{|C|})(1-w(C)t^{|C|})^*{\;\in\;}\CM[[t]]$ by Lemma \ref{ProdOfFAndConjF}. 
Indeed, the inverses are in $\RM[[t]]$ since 
$(1-w(C)t^{|C|})(1-w(C)t^{|C|})^*=1-2\{{\RP} w(C)\}t^{|C|}+|w(C)|^2t^{2|C|}$. 
\end{enumerate}
\end{rem}

We shall show that ${\bf Z}_{\HM}(G,w,t)$ can be expressed 
as the exponential of a generating function. 
We recall that Theorem \ref{ThmExpAndLog}, Proposition \ref{ProExp}, 
Proposition \ref{ProLog} and Corollary \ref{CorLog} hold even if we replace 
$\QM[X^*][[t]]$ with $\HM[[t]]$. By Remark \ref{RemQuatZeta}, we have: 
\begin{equation*}
{\bf Z}_{\HM}(G,w,t)=\prod_{C}\Big{\{}1-2\{{\RP} w(C)\}t^{|C|}+|w(C)|^2t^{2|C|}
\Big{\}}^{-1}.
\end{equation*}
Since $\alpha_C=2\{{\RP} w(C)\}t^{|C|}-|w(C)|^2t^{2|C|}$ is an element of 
$\RM[[t]]$ and has zero constant term, 
\begin{equation*}
\{1-2\{{\RP} w(C)\}t^{|C|}+|w(C)|^2t^{2|C|}\}^{-1}=\{1-\alpha_C\}^{-1}
=1+\sum_{k{\geq}1}\alpha_C^k
\end{equation*}
is also an element of $\RM[[t]]$ and $\beta_C=\sum_{k{\geq}1}\alpha_C^k$ has 
zero constant term. 
Since the constant term of ${\bf Z}_{\HM}(G,w,t)$ is equal to $1$, 
we can derive the following: 
\begin{equation*}
\begin{split}
\log{\bf Z}_{\HM}(G,w,t)&=\log\prod_{C}(1+\beta_C)=\sum_{C}\log(1+\beta_C)\\
&=\sum_{C}\log((1-\alpha_C)^{-1})=-\sum_{C}\log(1-\alpha_C)\\
&=-\sum_{C}\log\Big{\{}(1-w(C)t^{|C|})(1-w(C)^*t^{|C|})\Big{\}}\\
&=-\sum_{C}\Big{\{}\log(1-w(C)t^{|C|})+\log(1-w(C)^*t^{|C|})\Big{\}}\\
&=-\sum_{C}\sum_{n{\geq}1}\dfrac{-1}{n}
\big{\{}w(C)^n+(w(C)^*)^n\big{\}}t^{n|C|}\\
&=\sum_{C}\sum_{n{\geq}1}\dfrac{2{\RP}(w(C)^n)}{n}t^{n|C|}
=\sum_{n{\geq}1}\sum_{C}\dfrac{2{\RP}(w(C)^n)}{n}t^{n|C|}.
\end{split}
\end{equation*}
Therefore $\log{\bf Z}_{\HM}(G,w,t)$ has zero constant term, and it follows that: 
\begin{thm}\label{ThmExpGenFunction}
${\bf Z}_{\HM}(G,w,t)$ has the expression as the 
exponential of a generating function as follows: 
\begin{equation*}
{\bf Z}_{\HM}(G,w,t)=\exp\Big{(}\sum_{n{\geq}1}\sum_{C}
\dfrac{2{\RP}(w(C)^n)}{n}t^{n|C|}
\Big{)}.
\end{equation*}
where $C$ runs over all reduced cycles $C=e_{i_1}e_{i_2}{\cdots}e_{i_r}$ such that 
$i_1i_2{\cdots}i_r{\in}L_{[2m]}$. 
\end{thm}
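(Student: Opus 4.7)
The plan is to take the logarithm of both sides of the desired identity, manipulate the resulting expression using the results of Section 3 (which apply verbatim in $\HM[[t]]$), and then apply the exponential. Since $\log{\bf Z}_{\HM}(G,w,t)$ will be seen to have zero constant term, Theorem \ref{ThmExpAndLog}~(2) will conclude the proof once the logarithm has been identified with the claimed double sum.

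First I would invoke Remark \ref{RemQuatZeta} to rewrite each factor of the infinite product as
\[
(1-w(C)t^{|C|})(1-w(C)t^{|C|})^* \;=\; 1-\alpha_C,\qquad \alpha_C \;=\; 2\{{\RP}\,w(C)\}t^{|C|}-|w(C)|^2t^{2|C|}\in\RM[[t]].
\]
This is the essential simplification: every factor now lies in the commutative subring $\RM[[t]]\subset\HM[[t]]$, so Proposition \ref{ProLog} and Corollary \ref{CorLog} apply without any commutativity obstructions. For each fixed $r$, only finitely many cycles $C$ of length at most $r$ arise (since $G$ is finite and $C$ is required to be indexed by a Lyndon word in $[2m]^*$), so the infinite product is well-defined in the $t$-adic topology on $\HM[[t]]$ and its logarithm can be computed factor by factor.

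Next I would take $\log$ of ${\bf Z}_{\HM}(G,w,t)=\prod_C(1-\alpha_C)^{-1}$. Proposition \ref{ProLog} (applied to finitely many factors at a time, then passing to the limit in the formal topology) combined with Corollary \ref{CorLog} yields
\[
\log{\bf Z}_{\HM}(G,w,t) \;=\; -\sum_C\log\bigl\{(1-w(C)t^{|C|})(1-w(C)^*t^{|C|})\bigr\}.
\]
Since $w(C)$ and $w(C)^*$ commute in $\HM$, a further application of Proposition \ref{ProLog} splits the logarithm inside the sum. Expanding each $\log(1-w(C)t^{|C|})$ by its defining series \eqref{DefLog} and combining conjugate terms via $w(C)^n+(w(C)^n)^*=2{\RP}(w(C)^n)$ then rewrites the right-hand side as $\sum_{n\geq1}\sum_C\frac{2{\RP}(w(C)^n)}{n}t^{n|C|}$, which manifestly has zero constant term.

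Applying $\exp$ to both sides and using Theorem \ref{ThmExpAndLog}~(2) yields the stated identity. The main delicate point I expect is the rigorous justification of the termwise logarithm and the interchange of summations: both rest on the fact that, in the $t$-adic topology on $\HM[[t]]$, only finitely many cycles $C$ contribute to the coefficient of $t^r$ for each fixed $r$, so all manipulations are legitimate finite identities at each fixed power of $t$.
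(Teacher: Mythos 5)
Your proposal is correct and follows essentially the same route as the paper: reduce each Euler factor to $1-\alpha_C$ with $\alpha_C\in\RM[[t]]$ via Remark \ref{RemQuatZeta}, take logarithms using Proposition \ref{ProLog} and Corollary \ref{CorLog}, split each factor using the commutativity of $w(C)$ and $w(C)^*$, combine conjugate terms into $2{\RP}(w(C)^n)$, and exponentiate via Theorem \ref{ThmExpAndLog}. Your added remarks on the $t$-adic justification of the termwise logarithm make explicit a point the paper leaves implicit, but the argument is the same.
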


We can also express ${\bf Z}_{\HM}(G,w,t)$ as the square of the 
exponential of a generating function as follows: 
\[
{\bf Z}_{\HM}(G,w,t)=\Big{\{}\exp\Big{(}\sum_{n{\geq}1}\sum_{C}
\dfrac{{\RP}(w(C)^n)}{n}t^{n|C|}\Big{)}\Big{\}}^2.
\]

\section{Study determinant expressions for ${\bf Z}_{\HM}(G,w,t)$}
Consider an $n \times n$ quaternionic matrix 
${\bf W} =({\bf W}_{uv})_{u,v{\in}V(G)}{\;\in\;}\Mat(n,\HM)$ with $(u,v)$-entry 
equals $0$ if $(u,v){\;\notin\;}D(G)$. 
We call ${\bf W}$ a {\em quaternionic weighted matrix} of $G$.
Furthermore, let $w(u,v)= {\bf W}_{uv}$ for $u,v \in V(G)$ and 
$w(e)= w(u,v)$ if $e=(u,v) \in D(G)$.

For a quaternionic weighted matrix ${\bf W}$ of $G$, 
we define two $2m \times 2m$ matrices 
${\bf B}_w=( {\bf B}^{(w)}_{ef} )_{e,f \in D(G)} $ and 
${\bf J}_w=( {\bf J}^{(w)}_{ef} )_{e,f \in D(G)} $ as follows: 
\[
{\bf B}^{(w)}_{ef} =\left\{
\begin{array}{ll}
w(e) & \mbox{if $t(e)=o(f)$, } \\
0 & \mbox{otherwise, } 
\end{array}
\right.
\  
{\bf J}^{(w)}_{ef} =\left\{
\begin{array}{ll}
w(e) & \mbox{if $f=e^{-1} $, } \\
0 & \mbox{otherwise. }
\end{array}
\right.
\]
We call ${\bf B}_w-{\bf J}_w$ the {\it quaternionic weighted edge matrix}. 
Then the Study determinant expression of ${\bf Z}_{\HM}(G,w,t)$ of Hashimoto type 
by use of the quaternionic weighted edge matrix is stated as follows: 

\begin{thm}\label{DetExpressionByEdgeMatrix}
\[
{\bf Z}_{\HM}(G,w,t)^{-1}= {\Sdet_t} ( {\bf I}_{2m} -( {\bf B}_w - {\bf J}_w )t) . 
\]
\end{thm}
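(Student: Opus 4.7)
The plan is to apply Proposition \ref{AmitsurForMatrixThm} to the $2m\times 2m$ quaternionic weighted edge matrix ${\bf B}_w - {\bf J}_w$, obtaining the factorization
\[
{\bf I}_{2m} - ({\bf B}_w - {\bf J}_w)t = \prod_l^{<}\bigl({\bf I}_{2m} - ({\bf B}_w - {\bf J}_w)_l \, t^{|l|}\bigr)
\]
in $\Mat(2m,\HM)[[t]]$, where $l$ runs over Lyndon words $(i_1,j_1)(i_2,j_2)\cdots(i_r,j_r)\in L_{[2m]\times[2m]}$ satisfying $j_k = i_{k+1}$ for $k=1,\ldots,r-1$. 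Taking $\Sdet_t$ of both sides and invoking its multiplicativity (Proposition \ref{SdettProperties}(iii)) reduces the task to evaluating each factor separately.

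The next step is a case analysis. By the definitions of ${\bf B}_w$ and ${\bf J}_w$, the $(e,f)$-entry of ${\bf B}_w - {\bf J}_w$ equals $w(e)$ exactly when $t(e)=o(f)$ and $f\neq e^{-1}$, and vanishes otherwise. Hence $({\bf B}_w - {\bf J}_w)_l$ is a nonzero scalar multiple of ${\bf E}_{i_1 j_r}$ precisely when $e_{i_1},e_{i_2},\ldots,e_{i_r},e_{j_r}$ traces a reduced path in $G$, in which case the scalar equals $w(e_{i_1})w(e_{i_2})\cdots w(e_{i_r})$. If $j_r\neq i_1$, the factor ${\bf I}_{2m} - ({\bf B}_w - {\bf J}_w)_l t^{|l|}$ is triangular with all diagonal entries equal to $1$, and Proposition \ref{SdettProperties}(vii) gives $\Sdet_t = 1$. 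If instead $j_r = i_1$, the arcs $C = e_{i_1}e_{i_2}\cdots e_{i_r}$ form a reduced cycle of length $r$ (reducedness of $C^2$ follows from the admissibility of the transition $e_{i_r}\to e_{i_1}$ recorded by the nonvanishing of $a_{i_r i_1}$), the scalar equals $w(C)$, and the factor is diagonal with $(1-w(C)t^{|C|})$ at position $(i_1,i_1)$; Proposition \ref{SdettProperties}(vii) then gives $\Sdet_t = (1-w(C)t^{|C|})(1-w(C)t^{|C|})^*$.

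The main technical obstacle is the combinatorial correspondence between pair Lyndon words of the cyclic form $(i_1,i_2)(i_2,i_3)\cdots(i_r,i_1)\in L_{[2m]\times[2m]}$ and the Lyndon words $i_1 i_2\cdots i_r \in L_{[2m]}$ indexing the product defining ${\bf Z}_{\HM}(G,w,t)$. I plan to verify this bijection by showing that lexicographic comparison of such cyclic pair sequences reduces to lexicographic comparison of the underlying letter sequences, since the overlap $j_k=i_{k+1}$ forces the second component of each pair to equal the first component of the next, so every comparison step at the pair level is governed by a comparison at the letter level; primitivity transfers analogously. With this bijection established, the product of nontrivial factors matches exactly the defining product of ${\bf Z}_{\HM}(G,w,t)^{-1}$, yielding
\[
\Sdet_t\bigl({\bf I}_{2m} - ({\bf B}_w - {\bf J}_w)t\bigr) = \prod_C (1-w(C)t^{|C|})(1-w(C)t^{|C|})^* = {\bf Z}_{\HM}(G,w,t)^{-1}.
\]
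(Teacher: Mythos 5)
Your proposal is correct and follows essentially the same route as the paper: apply Proposition \ref{AmitsurForMatrixThm} to ${\bf B}_w-{\bf J}_w$, take $\Sdet_t$ and use its multiplicativity, evaluate each factor via Proposition \ref{SdettProperties}(vii) according to whether $j_r=i_1$, and pass from cyclic pair Lyndon words in $L_{[2m]\times[2m]}$ to Lyndon words in $L_{[2m]}$. The only detail the paper makes explicit that you leave implicit is that the resulting factors lie in $\RM[[t]]$, so the (a priori ordered) infinite product of Study determinants is order-independent and matches the defining product of ${\bf Z}_{\HM}(G,w,t)^{-1}$.
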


Beforehand, we mention that the right hand side does not depend on the way of 
arranging arcs which determines row and column indexes. 
Indeed, if we rearrange arcs, then it turns into 
$\Sdet_t\big{(}{\bf P}( {\bf I}_{2m} -( {\bf B}_w - {\bf J}_w )t){\bf P}^{-1}\big{)}$
for some permutation matrix ${\bf P}$. However, from Proposition \ref{SdettProperties} 
(v), this is equal to $\Sdet_t( {\bf I}_{2m} -( {\bf B}_w - {\bf J}_w )t)$. 

\begin{proof} 
Set $A=\HM$ in Proposition \ref{AmitsurForMatrixThm}. 
Then we can take Study determinants of both sides in (\ref{EqnAmitsurIdentity}), so that  
\begin{equation}\label{EqnEulerProd}
\begin{split}
&\Sdet_t({\bf I}_{2m}-{\bf A}t)\\
&=\Sdet_t\Big{(}\prod_{\substack{(i_1,j_1){\cdots}(i_r,j_r){\in}L_{[2m]{\times}[2m]}\\
j_k=i_{k+1}\;(k=1,{\cdots},r-1)}}^{<}({\bf I}_n-a_{i_1i_2}a_{i_2i_3}{\cdots}
a_{i_{r-1}i_r}a_{i_rj_r}{\bf E}_{i_1j_r}t^{r})\Big{)}\\
&=\prod_{\substack{(i_1,j_1){\cdots}(i_r,j_r){\in}L_{[2m]{\times}[2m]}\\
j_k=i_{k+1}\;(k=1,{\cdots},r-1)}}\Sdet_t({\bf I}_n-a_{i_1i_2}a_{i_2i_3}{\cdots}
a_{i_{r-1}i_r}a_{i_rj_r}{\bf E}_{i_1j_r}t^{r}).
\end{split}
\end{equation}
We notice that the last formula does not depend on the order in which 
$\Sdet_t$ are multiplied since $\Sdet_t$ take values in $\RM[[t]]$. 
It follows from Proposition \ref{SdettProperties} (vii) that if $j_r=i_1$, then 
\begin{equation*}
\begin{split}
&\Sdet_t({\bf I}_n-a_{i_1i_2}a_{i_2i_3}{\cdots}a_{i_ri_1}
{\bf E}_{i_1i_1}t^{r})\\
&=(1-a_{i_1i_2}a_{i_2i_3}{\cdots}a_{i_ri_1}t^{r})
(1-a_{i_1i_2}a_{i_2i_3}{\cdots}a_{i_ri_1}t^{r})^*,
\end{split}
\end{equation*}
and otherwise,  
\[
\Sdet_t({\bf I}_n-a_{i_1i_2}a_{i_2i_3}{\cdots}a_{i_rj_r}
{\bf E}_{i_1j_r}t^{r})=1.
\]
Putting ${\bf A}={\bf B}_w-{\bf J}_w$, then 
\[
a_{ij}=a_{e_ie_j}=\begin{cases}
w(e_i) & \text{if $t(e_i)=o(e_j)$ and $e_j{\neq}e_i^{-1}$},\\
0 & \text{otherwise}.
\end{cases}
\]
Therefore, (\ref{EqnEulerProd}) yields: 
\begin{equation*}
\begin{split}
&\Sdet_t({\bf I}_{2m}-({\bf B}_w-{\bf J}_w)t)\\
&=\prod_{\substack{(i_1,i_2){\cdots}(i_r,i_1){\in}L_{[2m]{\times}[2m]}\\
e_{i_1}e_{i_2}{\cdots}e_{i_r} : \text{reduced cycle}}}
\Sdet_t({\bf I}_n-w(e_{i_1})w(e_{i_2}){\cdots}w(e_{i_r}){\bf E}_{i_1i_1}t^{r})\\
&=\prod_{\substack{(i_1,i_2){\cdots}(i_r,i_1){\in}L_{[2m]{\times}[2m]}\\
e_{i_1}e_{i_2}{\cdots}e_{i_r} : \text{reduced cycle}}}
(1-w(e_{i_1}){\cdots}w(e_{i_r})t^{r})
(1-w(e_{i_1}){\cdots}w(e_{i_r})t^{r})^*.
\end{split}
\end{equation*}
Each Lyndon word $(i_1,i_2){\cdots}(i_r,i_1)$ in $L_{[2m]{\times}[2m]}$ 
corresponds to a Lyndon word 
$i_1i_2{\cdots}i_r$ in $L_{[2m]}$ bijectively. Hence we have: 
\begin{equation*}
\begin{split}
&\Sdet_t({\bf I}_{2m}-({\bf B}_w-{\bf J}_w)t)\\
&=\prod_{\substack{i_1i_2{\cdots}i_r{\in}L_{[2m]}\\
e_{i_1}e_{i_2}{\cdots}e_{i_r} : \text{reduced cycle}}}
(1-w(e_{i_1}){\cdots}w(e_{i_r})t^{r})
(1-w(e_{i_1}){\cdots}w(e_{i_r})t^{r})^*.
\end{split}
\end{equation*}
Remark \ref{RemQuatZeta} (ii) implies 
the inverse of the right hand side is ${\bf Z}_{\HM}(G,w,t)$. 
Hence we obtain the conclusion as desired. 
\end{proof}

Finally, we shall show the Study determinant expression of Bass type. 
We define $n \times n$ matrices $\tilde{{\bf W}} =(\tilde{{\bf W}}_{uv} )_{u,v \in V(G)} $ and 
$ \tilde{\bf D} =(\tilde{\bf D}_{uv})_{u,v \in V(G)}$ to be as follows:
\begin{equation*}
\begin{split}
\tilde{\bf W}_{uv}&=\begin{cases}
(1-w(e)w(e^{-1})t^2)^{-1}w(e) & \text{if $e=(u,v){\in}D(G)$} \\
0 & \text{otherwise,}
\end{cases}\\
\tilde{\bf D}_{uv}&=\delta_{uv}\sum_{\substack{e{\in}D(G)\\o(e)=u}}
(1-w(e)w(e^{-1})t^2)^{-1}w(e)w(e^{-1}).
\end{split}
\end{equation*}
Then the Study determinant expression of Bass type is stated as follows: 

\begin{thm}\label{DetExpressionOfBassType}
\begin{equation*}
\begin{split}
&{\bf Z}_{\HM}(G,w,t)^{-1}\\
&=\Sdet_t({\bf I}_n-t\Tilde{\bf W}+t^2\Tilde{\bf D})
\prod_{i=1}^m(1-w(e_i)w(e_i^{-1})t^2)(1-w(e_i)w(e_i^{-1})t^2)^*. 
\end{split}
\end{equation*}
\end{thm}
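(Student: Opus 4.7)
The plan is to reduce the Hashimoto-type expression $\Sdet_t({\bf I}_{2m} - t({\bf B}_w - {\bf J}_w))$ from Theorem \ref{DetExpressionByEdgeMatrix} to the Bass-type form by first factoring out the effect of the backtracking matrix ${\bf J}_w$ and then applying Proposition \ref{SdettProperties} (viii) to shrink the remaining $2m \times 2m$ Study determinant to size $n$. The two central ingredients from Proposition \ref{SdettProperties} are multiplicativity (iii) and the Sylvester-type identity (viii).

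Since ${\bf I}_{2m} + t{\bf J}_w$ has constant term ${\bf I}_{2m}$, it is invertible in $\Mat(2m,\HM[[t]])$, so one may write
\[
{\bf I}_{2m} - t({\bf B}_w - {\bf J}_w) = ({\bf I}_{2m} + t{\bf J}_w)\bigl({\bf I}_{2m} - t({\bf I}_{2m} + t{\bf J}_w)^{-1} {\bf B}_w\bigr),
\]
and $\Sdet_t$ splits via (iii). For the first factor, arranging arcs so that $e_i$ and $e_i^{-1}$ are adjacent makes ${\bf I}_{2m} + t{\bf J}_w$ block diagonal, with $2\times 2$ blocks $\bigl(\begin{smallmatrix} 1 & w(e_i)t \\ w(e_i^{-1})t & 1 \end{smallmatrix}\bigr)$. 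Each such block admits an LU-type factorization whose upper-triangular factor has diagonal entries $1$ and $1 - w(e_i^{-1})w(e_i)t^2$, so by (iii) and (vii) the Study determinant of the block equals $(1 - w(e_i^{-1})w(e_i)t^2)(1 - w(e_i^{-1})w(e_i)t^2)^*$; using $\mathrm{Re}(xy) = \mathrm{Re}(yx)$ and $|xy| = |yx|$ for quaternions, this coincides with the claimed $(1 - w(e_i)w(e_i^{-1})t^2)(1 - w(e_i)w(e_i^{-1})t^2)^*$.

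For the second factor, I would introduce the $n \times 2m$ incidence matrices ${\bf K}_{v,e} = \delta_{v,o(e)}$ and $({\bf L}_w)_{v,e} = w(e)\delta_{v,t(e)}$, so that ${\bf B}_w = {\bf L}_w^T {\bf K}$. Proposition \ref{SdettProperties} (viii) then gives
\[
\Sdet_t\bigl({\bf I}_{2m} - t({\bf I}_{2m} + t{\bf J}_w)^{-1} {\bf L}_w^T {\bf K}\bigr) = \Sdet_t\bigl({\bf I}_n - t{\bf K}({\bf I}_{2m} + t{\bf J}_w)^{-1} {\bf L}_w^T\bigr).
\]
The remaining task is to compute the $(u,v)$-entry of $t{\bf K}({\bf I}_{2m} + t{\bf J}_w)^{-1} {\bf L}_w^T$ using the explicit $2\times 2$ block inverse. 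The contribution from the diagonal entries of the block inverse recovers $t\tilde{\bf W}_{uv}$ when $(u,v) \in D(G)$, while the contribution from the off-diagonal entries (which forces $v=u$) sums over all $e$ with $o(e)=u$ to assemble into $-t^2 \tilde{\bf D}_{uu}$. This will yield $t{\bf K}({\bf I}_{2m} + t{\bf J}_w)^{-1}{\bf L}_w^T = t\tilde{\bf W} - t^2 \tilde{\bf D}$, which combined with the first factor gives the claimed identity.

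The principal obstacle is managing the quaternionic noncommutativity in the size-reduction step: the $(1,1)$ and $(2,2)$ entries of the block inverse are $(1 - w(e)w(e^{-1})t^2)^{-1}$ and $(1 - w(e^{-1})w(e)t^2)^{-1}$ respectively, whereas in the classical commutative Watanabe--Fukumizu setting these coincide. The reassembly of the diagonal into $\tilde{\bf D}$ rests on the key identity $w(e)(1 - w(e^{-1})w(e)t^2)^{-1} w(e^{-1}) = (1 - w(e)w(e^{-1})t^2)^{-1} w(e)w(e^{-1})$, which I would verify by expanding both sides as power series in $t$ and using $a(ba)^k b = (ab)^{k+1}$ for all $k \geq 0$. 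Careful attention to the order of matrix multiplications (weights attaching on the correct side) will be needed throughout.
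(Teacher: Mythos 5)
Your proposal is correct and follows essentially the same route as the paper: factor out $({\bf I}_{2m}+t{\bf J}_w)$, evaluate its Study determinant blockwise via Proposition \ref{SdettProperties} (iv)/(vii), write ${\bf B}_w$ as a product of two $2m\times n$ incidence-type matrices (yours are just the transposes of the paper's ${\bf K}$ and ${\bf L}$), shrink to size $n$ with Proposition \ref{SdettProperties} (viii), and identify the resulting matrix with $\tilde{\bf W}-t\tilde{\bf D}$ from the explicit $2\times2$ block inverse. The side issues you flag (the order $w(e^{-1})w(e)$ versus $w(e)w(e^{-1})$, handled by $\operatorname{Re}(xy)=\operatorname{Re}(yx)$ and $a(ba)^kb=(ab)^{k+1}$) are real but resolve exactly as you indicate.
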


\begin{proof}  We readily see that: 
\begin{equation*}
\begin{split}
\Sdet_t({\bf I}_{2m}-t({\bf B}_w-{\bf J}_w))
&=\Sdet_t({\bf I}_{2m}+t{\bf J}_w-t{\bf B}_w) \\
&=\Sdet_t({\bf I}_{2m}-t{\bf B}_w({\bf I}_{2m}+t{\bf J}_w)^{-1})\\
&{\quad}{\times}\Sdet_t({\bf I}_{2m}+t{\bf J}_w). 
\end{split}
\end{equation*}
Now, let ${\bf K} =( {\bf K}_{ev} )$ ${}_{e \in D(G); v \in V(G)}$ and 
${\bf L} =( {\bf L}_{ev} )_{e \in D(G); v \in V(G)} $ be 
two $2m \times n$ matrices defined as follows: 
\[
{\bf K}_{ev} :=\left\{
\begin{array}{ll}
w(e) & \mbox{if $t(e)=v$, } \\
0 & \mbox{otherwise, } 
\end{array}
\right.
\qquad 
{\bf L}_{ev} :=\left\{
\begin{array}{ll}
1 & \mbox{if $o(e)=v$, } \\
0 & \mbox{otherwise. } 
\end{array}
\right.
\] 
Then one can easily check that:  
\[
{\bf K} {}^T\! {\bf L} = {\bf B}_w. 
\]
Thus, from Proposition \ref{SdettProperties} (viii), we get the following: 
\begin{equation}\label{EqnSdettFactorization}
\begin{split}
\Sdet_t({\bf I}_{2m}-t({\bf B}_w-{\bf J}_w))
&=\Sdet_t({\bf I}_{2m}-t{\bf K}{}^T\!{\bf L}({\bf I}_{2m}+t {\bf J}_w)^{-1})\\
&{\quad}{\times}\Sdet_t({\bf I}_{2m}+t{\bf J}_w) \\ 
&=\Sdet_t({\bf I}_{n}-t{}^T\!{\bf L}({\bf I}_{2m}+t{\bf J}_w)^{-1}{\bf K})\\
&{\quad}{\times}\Sdet_t({\bf I}_{2m}+t{\bf J}_w). 
\end{split}
\end{equation} 
Arranging arcs so that $D(G)=\{e_1, e_1^{-1},\cdots,e_m, e_m^{-1}\}$, we have 
from Proposition \ref{SdettProperties} (iv), (vii) that:
\begin{equation}\label{SdettByProd}
\begin{split}
&\Sdet_t({\bf I}_{2m}+t{\bf J}_w)=
\Sdet_t \begin{bmatrix}
1 & w(e_1)t & 0 & 0 & \cdots & & \\
w(e_1^{-1})t & 1 & 0 & 0 & \cdots & & \\
0 & 0 & 1 & w(e_2)t & 0 & & \\
0 & 0 & w(e_2^{-1})t & 1 & 0 & & \\
\vdots & \vdots & \vdots & 0 & \ddots & & 
\end{bmatrix}\\
&=\Sdet_t \begin{bmatrix}
1-w(e_1)w(e_1^{-1})t^2 & w(e_1)t & 0 & 0 & \cdots & & \\
0 & 1 & 0 & 0 & \cdots & & \\
0 & 0 & 1-w(e_2)w(e_2^{-1})t^2 & w(e_2)t & 0 & & \\
0 & 0 & 0 & 1 & 0 & & \\
\vdots & \vdots & \vdots & 0 & \ddots & & 
\end{bmatrix}\\
&=\prod_{i=1}^m(1-w(e_i)w(e_i^{-1})t^2)(1-w(e_i)w(e_i^{-1})t^2)^*.
\end{split}
\end{equation}
Let ${\bf X}(e)$ be the $2{\times}2$ matrix defined as follows: 
\[
{\bf X}(e)=\begin{bmatrix}
1 & w(e)t \\
w(e^{-1})t & 1
\end{bmatrix}.
\]
Then, the inverse of ${\bf X}(e)$ is given by 
\[
{\bf X}(e)^{-1}=\begin{bmatrix}
(1-w(e)w(e^{-1})t^2)^{-1} & -(1-w(e)w(e^{-1})t^2)^{-1}w(e)t \\
-(1-w(e^{-1})w(e)t^2)^{-1}w(e^{-1})t & (1-w(e^{-1})w(e)t^2)^{-1}
\end{bmatrix}.
\]
Therefore, 
\[
({\bf I}_{2m}+t{\bf J}_w)^{-1}=
\begin{bmatrix}
{\bf X}(e_1)^{-1} & {\bf O} & \cdots & {\bf O} \\
{\bf O} & {\bf X}(e_2)^{-1} &  & \vdots \\
\vdots &  & \ddots & {\bf O} \\
{\bf O} & \cdots & {\bf O} & {\bf X}(e_m)^{-1} 
\end{bmatrix}.
\]
If $e=(u,v){\;\in\;}D(G)$, then we have by a calculation that: 
\begin{equation*}
\begin{split}
({}^T\!{\bf L}({\bf I}_{2m}+t{\bf J}_w)^{-1}{\bf K})_{uv}
&=\sum_{f,f'{\in}D(G)}({}^T\!{\bf L})_{uf}\big{(}({\bf I}_{2m}+t{\bf J}_w)^{-1}\big{)}_{ff'}
{\bf K}_{f'v}\\
&=({}^T\!{\bf L})_{ue}\big{(}({\bf I}_{2m}+t{\bf J}_w)^{-1}\big{)}_{ee}
{\bf K}_{ev}\\
&=(1-w(e)w(e^{-1})t^2)^{-1}w(e).
\end{split}
\end{equation*}
If $u=v$, then we have: 
\begin{equation*}
\begin{split}
({}^T\!{\bf L}({\bf I}_{2m}+t{\bf J}_w)^{-1}{\bf K})_{uu}
&=\sum_{f,f'{\in}D(G)}({}^T\!{\bf L})_{uf}\big{(}({\bf I}_{2m}+t{\bf J}_w)^{-1}\big{)}_{ff'}
{\bf K}_{f'u}\\
&=\sum_{\substack{e{\in}D(G)\\o(e)=u}}
({}^T\!{\bf L})_{ue}\big{(}({\bf I}_{2m}+t{\bf J}_w)^{-1}\big{)}_{ee^{-1}}
{\bf K}_{e^{-1}u}\\
&=-\sum_{\substack{e{\in}D(G)\\o(e)=u}}(1-w(e)w(e^{-1})t^2)^{-1}w(e)w(e^{-1})t.
\end{split}
\end{equation*}
Hence, it follows that: 
\begin{equation}\label{EqnByUseOfWAndD}
{}^T\!{\bf L}({\bf I}_{2m}+t{\bf J}_w)^{-1}{\bf K}=\Tilde{\bf W}-t\Tilde{\bf D}.
\end{equation}
Combining (\ref{EqnSdettFactorization}), (\ref{SdettByProd}) and 
(\ref{EqnByUseOfWAndD}), we obtain: 
\begin{equation*}
\begin{split}
&\Sdet_t({\bf I}_{2m}-t({\bf B}_w-{\bf J}_w))\\
&=\Sdet_t({\bf I}_n-t\Tilde{\bf W}+t^2\Tilde{\bf D})
\prod_{i=1}^m(1-w(e_i)w(e_i^{-1})t^2)(1-w(e_i)w(e_i^{-1})t^2)^*.
\end{split}
\end{equation*}
Now, the assertion follows from Theorem \ref{DetExpressionByEdgeMatrix}. 
\end{proof}




\begin{thebibliography}{99}


\bibitem{Aslaksen1996}
Aslaksen, H.: 
Quaternionic Determinants. 
Math. intelligencer {\bf 18}, no3, pp. 57--65 (1996)

\bibitem{Bass1992}
Bass, H.: 
The Ihara-Selberg zeta function of a tree lattice. 
Internat. J. Math. {\bf 3}, pp. 717--797 (1992)

\bibitem{BR2011}
Berstel, J., Reutenauer, C.:  
Noncommutative rational series with applications, 
Cambridge University press, Cambridge, 2011. 

\bibitem{Bourbaki}
Bourbaki, N.:  
Algebra II, 
Springer, Berlin, 1990. 

\bibitem{FZ1999}
Foata, D., Zeilberger, D.:  
A combinatorial proof of Bass's evaluations of the Ihara-Selberg zeta 
function for graphs.  
Trans. Amer. Math. Soc. {\bf 351}, pp. 2257--2274 (1999) 

\bibitem{Hashimoto1989}
Hashimoto, K.:
Zeta Functions of Finite Graphs and Representations 
of $p$-Adic Groups. 
in "Adv. Stud. Pure Math". Vol. 15, pp. 211--280, Academic Press, 
New York, 1989.

\bibitem{Hashimoto1990}
Hashimoto, K.:
On Zeta and L-Functions of Finite Graphs. 
Internat. J. Math. {\bf 1}, pp. 381--396 (1990)

\bibitem{Ihara1966}
Ihara, Y.: 
On discrete subgroups of the two by two projective linear group 
over $p$-adic fields. 
J. Math. Soc. Japan {\bf 18}, pp.  219--235 (1966)

\bibitem{KS2000}
Kotani, M., Sunada, T.:
Zeta functions of finite graphs. 
J. Math. Sci. U. Tokyo {\bf 7}, pp.  7--25 (2000) 

\bibitem{Lothaire1997}
Lothaire, M.: Combinatorics on words. 
Cambridge University Press, (1997)

\bibitem{MS2004}
Mizuno, H., Sato, I.:
Weighted zeta functions of graphs. 
J. Combin. Theory Ser. B {\bf 91}, pp.169--183 (2004) 

\bibitem{RS1987}
Reutenauer, C., Sch\"{u}tzenberger, M-P.:  
A formula for the Determinant of a Sum of Matrices.  
Lett. Math. Phys. {\bf 13}, pp. 299--302 (1987) 

\bibitem{Serre}
Serre, J. -P.:  
Trees, 
Springer-Verlag, New York, 1980. 

\bibitem{ST1996}
Stark, H. M., Terras, A. A.: 
Zeta functions of finite graphs and coverings. 
Adv. Math. {\bf 121}, pp. 124-165 (1996) 

\bibitem{Study1920}
Study, E.: 
Zur Theorie der lineare Gleichungen. 
Acta. Math. {\bf 42}, pp. 1--61 (1920) 

\bibitem{Sunada1986}
Sunada, T.: 
$L$-Functions in Geometry and Some Applications.  
in "Lecture Notes in Math"., Vol. 1201, pp. 266--284, 
Springer-Verlag, New York, 1986.

\bibitem{Sunada1988}
Sunada, T.: 
Fundamental Groups and Laplacians(in Japanese),
Kinokuniya, Tokyo, 1988.

\bibitem{WF2009}
Watanabe, Y., Fukumizu, K.: 
Graph Zeta Function in the Bethe Free Energy and Loopy Belief Propagation. 
Adv. Neural Inf. Process. Syst. {\bf 22} 2017--2025 (2009)

\bibitem{Zhang1997}
Zhang, F.: Quaternions and matrices of quaternions. 
Linear Algebra Appl. {\bf 251}, pp. 21--57 (1997)

\bibitem{Zhang2011}
Zhang, F.: 
Matrix Theory 2nd ed.,
Springer, New York, 2011.

\end{thebibliography}
\end{document}